\newcommand{\upperRomannumeral}[1]{\uppercase\expandafter{\romannumeral#1}}
\theoremstyle{plain}
  \newtheorem{proposition}[]{Proposition}
  \newtheorem{lemma}[]{Lemma}
  \newtheorem{theorem}[]{Theorem}
  \newtheorem{corollary}[]{Corollary}
  \newtheorem{remark}[]{Remark}
\title[Mass spectrum in the Ising model]{A Gaussian process related to the mass spectrum of the near-critical Ising model}
\author[F. Camia]{Federico Camia}
\address{Division of Science, NYU Abu Dhabi, Saadiyat Island, Abu Dhabi, UAE \& Department of Mathematics, VU Amsterdam, De Boelelaan 1081a, 1081 HV Amsterdam, the Netherlands.}
\email{federico.camia@nyu.edu}
\author[J. Jiang]{Jianping Jiang}
\address{NYU-ECNU Institute of Mathematical Sciences at NYU Shanghai, 3663 Zhongshan
Road North, Shanghai 200062, China.}
\email{jjiang@nyu.edu}
\author[C.M. Newman]{Charles M. Newman}
\address{Courant Institute of Mathematical Sciences, New York University,
251 Mercer st, New York, NY 10012, USA, \& NYU-ECNU Institute of Mathematical
Sciences at NYU Shanghai, 3663 Zhongshan Road North, Shanghai 200062, China.}
\email{newman@cims.nyu.edu}
\begin{document}
\begin{abstract}
Let $\Phi^h(x)$ with $x=(t,y)$ denote the near-critical scaling limit of the planar Ising magnetization field. We take the limit of $\Phi^h$ as the spatial coordinate $y$ scales to infinity with $t$ fixed and prove that it is a stationary Gaussian process $X(t)$ whose covariance function is the Laplace transform of a mass spectral measure $\rho$ of the relativistic quantum field theory associated to the Euclidean field $\Phi^h$. Our analysis of the small distance/time behavior of the covariance functions of $\Phi^h$ and $X(t)$ shows that $\rho$ is finite but has infinite first moment.
\end{abstract}
\maketitle

\section{Synopsis}
In \cite{CGN15} (resp., \cite{CGN16}), it was shown that the critical Ising model (resp., near-critical model with external magnetic field $ha^{15/8}$) on the rescaled lattice $a\mathbb{Z}^2$ has a scaling limit $\Phi^0$ (resp., $\Phi^{h}$) as $a\downarrow 0$ --- denoted then by $\Phi^{\infty}$ (resp., $\Phi^{\infty, h}$). $\Phi^h$ is a (generalized) random field on $\mathbb{R}^2$ --- i.e., for suitable test functions $f$ on $\mathbb{R}^2$, there are random variables $\Phi^h(f)$, formally written as $\int_{\mathbb{R}^2}\Phi^h(x)f(x)dx$. Euclidean random fields such as $\Phi^h$ on the Euclidean ``space-time'' $\mathbb{R}^d:=\{x=(x_0,y_1,\ldots,y_{d-1})\}$ (in our case $d=2$) are related to quantum fields on relativistic space-time, $\{(t,y_1,\ldots,y_{d-1})\}$, essentially by replacing $x_0$ with a complex variable and analytically continuing from the purely real $x_0$ to a pure imaginary $(-it)$ --- see \cite{OS73}, Chapter 3 of \cite{GJ87} and \cite{MM97} for background. One major reason for interest in $\Phi^h$ is that the associated quantum field is predicted \cite{Zam89a,Zam89b} to have remarkable properties including relations between the masses of particles within the quantum field theory and the Lie algebra $E_8$ --- see \cite{Del04,BG11,MM12}.

In \cite{CJN17}, exponential decay of truncated correlations in $\Phi^h$ was proved; this shows, roughly speaking, the existence of at least one particle with strictly positive mass and no smaller mass particles in the relativistic quantum field theory associated to $\Phi^h$. In this paper, we study a Gaussian process $X(t)$ which is the long spatial distance scaling limit of $\Phi^h$ and is related to a mass spectral measure $\rho(m)$ of the relativistic quantum field theory associated to $\Phi^h$ in that the covariance function $K$ of $X(t)$ is the Laplace transform  of $\rho(m)$. For more about the appearance of the particle masses of \cite{Zam89a,Zam89b} in $K$ (and thus in the atoms of $\rho$), see \eqref{eq:Kconj} below.

It is possible to obtain the Gaussian process $X(t)$ directly from the near-critical Ising model on $\mathbb{Z}^2$ with magnetic field $\tilde{h}$ by scaling the horizontal and vertical coordinates separately as $\tilde{h}\rightarrow 0$. We will present that approach in another paper including its applicability to $\mathbb{Z}^d$ for $d>2$. For $d=3$, the covariance function $K(t)$ for small~$t$ would be nondifferentiable  at $t=0$, like in Theorem \ref{thm:HK} but with $K(0)-K(\epsilon)$ having the exponent $1-\eta$ rather than $3/4$ with $\eta$ the Ising critical exponent for $d=3$. For $d>4$, $K$ would be differentiable while for $d=4$, there is the possibility of logarithmic behavior.



\section{Introduction}
\subsection{Background} \label{background}
In (classical) relativistic field theory, the existence of a conserved energy-momentum four-vector $p=(E,{\mathbf p})$ is guaranteed by time and translation invariance. In quantizing a classical theory to a relativistic quantum field theory, the energy-momentum four-vector becomes a self-adjoint operator $P=(\mathcal{H},\mathcal{P})$ acting on quantum states in a Hilbert space. On physical grounds, it is natural to make certain assumptions on the spectrum of the operator $P$. These include (see, e.g., \cite{BD65}):
\begin{enumerate}
\item[(i)] The spectrum of $P$ all lies within the forward light cone:
\begin{equation} \label{eq:forwardlightcone}
E^2-{\mathbf p}^2 \geq 0 \text{ and } E \geq 0.
\end{equation}
\item[(ii)] There exists a unique Lorentz-invariant ground state $\Omega$ of lowest energy. This is the \emph{vacuum} state, whose energy is chosen by convention to be zero: ${\mathcal H}\Omega = 0$. From this and \eqref{eq:forwardlightcone}, it follow that ${\mathcal P}\Omega = 0$ also. The Lorentz invariance of the vacuum ensures that $\Omega$ appears as the vacuum state to all inertial observers.
\item[(iii)] There exist states $\psi_i$ which are
(generalized) eigenvectors of both $\mathcal{H}$ and $\mathcal{P}$ and whose eigenvalues satisfy the relation $E_i^2-{\mathbf p}_i^2=m_i^2$. Each $\psi_i$ corresponds to a single-particle state describing a stable particle of mass $m_i$.
\item[(iv)] For a field theory whose particle content is a single type of particle with mass $m_1$, the spectrum of the energy-momentum operator $P$ has a discrete part containing the eigenvalue zero corresponding to $\Omega$
as well as the one-particle hyperboloid $E^2-{\bf p}^2=m_1^2$ corresponding
to~$\psi_1$. The spectrum also contains a continuous part that lies above the hyperboloid $E^2-{\bf p}^2=4m^2_1$.
\end{enumerate}
Examples of quantum field theories with these properties have been constructed rigorously. The simplest of those is the \emph{free scalar field of mass $\mu$}, also known as \emph{free boson field} or \emph{Klein-Gordon field} because the classical field $\varphi$ satisfies the Klein-Gordon equation
\begin{equation} \label{eq:Klein-Gordon}
\Big(\frac{\partial^2}{\partial t^2} - \nabla + \mu^2 \Big) \varphi(t,{\mathbf y}) = 0.
\end{equation}
This field theory describes a free particle of mass $\mu$, and the spectrum of the energy-momentum operator has the form described in item~(iv) above.

More complex, interacting quantum field theories can be constructed starting from Euclidean fields such as $\Phi^h$. Generally speaking (see, e.g., \cite{GJ87}), a Euclidean (generalized) field $\Phi$ is a random generalized function, that is, a random element of the space ${\mathcal D}'({\mathbb R}^d)$ of real distributions. Equivalently, a Euclidean field can be thought of as a probability measure $\mathbb P$ on ${\mathcal D}'({\mathbb R}^d)$. $\Phi$ acts on test functions $f \in {\mathcal D}({\mathbb R}^d) = C^{\infty}_0({\mathbb R}^d)$, and for each $f$, $\Phi(f)$ is a real random variable in $L^2({\mathbb P})$. Note that one can sometimes work with more general test functions and this will be the case in this paper. As the name suggests, a Euclidean field is assumed to be invariant under Euclidean transformations.

Taking a suitable test function $f({\mathbf y})$ on ${\mathbb R}^{d-1}$, $\Phi(f({\mathbf y})\delta_0(x_0)) \in L^2({\mathbb P}_0)$, where ${\mathbb P}_0$ is the restriction of $\mathbb P$ to the Euclidean ``time'' $x_0=0$ subspace. The space ${\mathbb H} := L^2({\mathcal D}'({\mathbb R}^{d-1}),{\mathbb P}_0)$ is interpreted as a quantum mechanical Hilbert space and $\psi := \Phi(f({\mathbf y})\delta_0(x_0))$ as a state vector. The Hilbert space $L^2({\mathcal D}'({\mathbb R}^d),{\mathbb P})$ is the path space for the quantum operators in the Heisenberg representation.

If the Euclidean field $\Phi$
has the Markov property, as is typically the case for fields arising in statistical mechanics,
then one has that
\begin{eqnarray}
&& \text{Cov}\left(\Phi^h(f({\mathbf y})\delta_0(x_0)), \Phi^h(f({\mathbf y}-{\mathbf x})\delta_s(x_0))\right) \nonumber \\
&& \quad = \text{Cov}\left(\Phi^h(f({\mathbf y})\delta_0(x_0)), e^{-s\mathcal{H}}\Phi^h(f({\mathbf y}-{\mathbf x})\delta_0(x_0))\right) \nonumber \\
&& \quad = \left(\psi,(e^{i{\mathbf x}\mathcal{P}}e^{-s\mathcal{H}}-\mathscr{P}_0)\psi\right) \label{eq:OS} \\
&& \quad = \left(\psi,(e^{i{\mathbf x}\mathcal{P}}e^{it\mathcal{H}}-\mathscr{P}_0)\psi\right), \label{eq:connection} 
\end{eqnarray}
where $t=is$, $(\cdot,\cdot)$ denotes the inner product in the Hilbert space $L^2({\mathcal D}'({\mathbb R}^{d-1}),{\mathbb P}_0)$, $\mathcal{H}$ is the Hamiltonian/energy operator (the generator of the Markov semigroup of $\Phi$ for translations along Euclidean time), $\mathcal{P}$ is the momentum operator (the generator of spatial translations in Euclidean space), and $\mathscr{P}_0$ is the orthogonal projection onto the eigenspace of constant random variables (i.e., $\mathscr{P}_0\psi = {\mathbb E}_0\psi$, where ${\mathbb E}_0$ denotes expectation with respect to ${\mathbb P}_0$). Equations \eqref{eq:OS} and \eqref{eq:connection} follow from the Osterwalder-Schrader reconstruction theorem \cite{OS73} and provide the connection between Euclidean fields and Relativistic Quantum Field Theory. Indeed, the Euclidean invariance of $\Phi$ implies that, after performing a so-called \emph{Wick rotation}, i.e., after replacing $s$ by $-it$, the field becomes invariant under Lorentz transformations, hence a relativistic field.

\subsection{Main results}
Let $H(t,y)$ be the covariance function of $\Phi^h$. The existence of $H$ follows from Proposition 6.1.4 of \cite{GJ87}. Note that the analyticity of $\mathbb{E}(\exp(i\Phi^h(f)))$ (which is axiom OS0 on p.~91 of \cite{GJ87}) can be proved for $h=0$ by using the GHS inequality \cite{GHS70} (as in Proposition~3.5 and Corollary~3.8 of \cite{CGN15}) or by arguments based on the Lee-Yang theorem \cite{LY52}. A useful technical tool in the Lee-Yang setting is a result (Theorem~7 of \cite{NW19}) that Gaussian tail bounds are preserved under convergence in distribution when the Lee-Yang property is valid. The GHS inequality can then be used to extend Gaussian tail bounds and hence analyticity from $h=0$ to $h>0$. We remark that $H$ actually depends on $h$; for most of this paper we fix $h>0$  and only specify the dependence on $h$ when there might be confusion. Note that $H$ for different values of $h$ are related to each other by a scale transformation (see \eqref{eq:CovPhi} or \eqref{eq:Hscl} below). Loosely speaking,
\begin{equation}
H(t,y)=\text{Cov}\left(\Phi^h(t_0,y_0),\Phi^h(t_0+t,y_0+y)\right) \text{ for any } (t_0,y_0)\in\mathbb{R}^2.
\end{equation}

To study the long spatial distance behavior, we define a family of stochastic processes $\{X_L(s): s\geq0\}$ indexed by a parameter $L>0$:
\begin{equation}
X_L(s):=\frac{\Phi^h\left(1_{[-L,L]}(y)\delta_s(t)\right)-\mathbb{E}\Phi^h\left(1_{[-L,L]}(y)\delta_s(t)\right)}{\sqrt{2L}},
\end{equation}
where $1_{[-L,L]}(y)\delta_s(t)$ is the product of an interval indicator function in $y$ and a delta function in $t$, and $\mathbb{E}$ is the expectation with respect to the field $\Phi^h$. Formally,
\begin{equation}\label{eq:iddef}
\Phi^h\left(1_{[-L,L]}(y)\delta_s(t)\right):=\int_{\mathbb{R}^2}\Phi^h(t,y)1_{[-L,L]}(y)\delta_s(t) dtdy.
\end{equation}
The integral in \eqref{eq:iddef} is not a priori well-defined because of the delta function. We will show in Appendix \ref{sec:pair} that it is well-defined  by approximating the delta function with nice (say smooth) functions. The mean zero stationary Gaussian process $\{X(s): s\in\mathbb{R}\}$ which will be the main focus of this paper is defined by the covariance function:
\begin{equation}\label{eq:Kdef}
\text{Cov}(X(s),X(t))=K(t-s):=\int_{-\infty}^{\infty}H(t-s,y)dy \text{ for any } s, t\in\mathbb{R}.
\end{equation}

Our first main result shows how $X(s)$ arises naturally from~$\Phi^h$ and how it is related to the relativistic quantum field theory associated to $\Phi^h$.
\begin{theorem}\label{thm:Gau}
For any $n\in\mathbb{N}$ and distinct $s_1,\dots,s_n\in\mathbb{R}$, we have
\begin{equation}\label{eq:Gau}
\left(X_L(s_1),\dots,X_L(s_n)\right)\Rightarrow\left(X(s_1),\dots,X(s_n)\right) \text{ as }L\rightarrow\infty,
\end{equation}
where ``$\Rightarrow$'' denotes convergence in distribution. Moreover, there exists $m_1>0$ such that the covariance function $K(s)$ of $X(s)$ is given by
\begin{equation} \label{eq:covariance}
K(s)  = \int_{m_1}^{\infty} e^{-m|s|} d\rho(m),
\end{equation}
where $\rho(m)$ is a mass spectral measure of the relativistic quantum field theory obtained from $\Phi^h$ via the Osterwalder-Schrader reconstruction theorem \cite{OS73}. See \eqref{eq:Hrep} and \eqref{eq:rho} below for the precise definition of $\rho$.
\end{theorem}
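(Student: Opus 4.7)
The plan is to establish the two assertions in sequence: first the joint Gaussian convergence $(X_L(s_1),\ldots,X_L(s_n))\Rightarrow(X(s_1),\ldots,X(s_n))$, and then the Källén--Lehmann-type representation of the limiting covariance $K$.

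The first step is the convergence of covariances. By bilinearity and the change of variables $u=y_2-y_1$, $v=(y_1+y_2)/2$, one computes
\[
\operatorname{Cov}(X_L(s_i),X_L(s_j)) = \int_{-2L}^{2L}\Bigl(1-\tfrac{|u|}{2L}\Bigr)H(s_j-s_i,u)\,du,
\]
and exponential decay of the truncated two-point function \cite{CJN17} shows $H(t,\cdot)\in L^1(\mathbb{R})$ for every fixed $t$, so dominated convergence produces $K(s_j-s_i)$ in the limit. Gaussianity of the joint limit then requires extra input. By the Cramér--Wold device it is enough to prove that every real linear combination $Y_L=\sum_k a_k X_L(s_k)$ is asymptotically Gaussian. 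Modulo the delta-function approximation handled in Appendix \ref{sec:pair}, $Y_L$ is a centered linear functional of $\Phi^h$ obtained by integrating the field along a bounded $y$-interval at several horizontal time slices. I would discretize the $y$-direction into blocks of length $B_L\to\infty$ with $B_L=o(L)$; exponential decay of pair correlations makes the cross-block covariances negligible, while inside each block the contributions are identically distributed by translation invariance. Combined with the FKG/association property inherited from the near-critical Ising scaling limit and the Gaussian tail bounds from \cite{NW19}, a block CLT in the spirit of Newman's theorem for associated random variables yields asymptotic normality and the correct limiting covariance $K(s_i-s_j)$.

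For the spectral representation, apply the Osterwalder--Schrader reconstruction to $\Phi^h$ to obtain commuting self-adjoint operators $\mathcal H,\mathcal P$ on the physical Hilbert space $\mathbb H$, with joint spectral measure supported in the forward light cone. Euclidean invariance of $\Phi^h$ forces Lorentz invariance of the reconstructed theory, and hence organizes the spectrum of $(\mathcal H,\mathcal P)$ by the mass invariant $m^2=E^2-\mathbf{p}^2$. Applying this to the state corresponding to $\delta_0(t)\otimes 1$, after smooth approximation, gives a Källén--Lehmann representation
\[
H(s,y) = \int_{m_1}^\infty \frac{1}{2\pi}K_0\bigl(m\sqrt{s^2+y^2}\bigr)\,d\rho'(m),
\]
where the mass gap $m_1>0$ is supplied by \cite{CJN17}. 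Using that $\frac{1}{2\pi}K_0(m\sqrt{s^2+y^2})$ is the Green's function of $-\Delta+m^2$ on $\mathbb R^2$, integration in $y$ reduces it to the 1D Green's function $(2m)^{-1}e^{-m|s|}$, and Fubini produces \eqref{eq:covariance} with $d\rho(m)=d\rho'(m)/(2m)$.

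The hard part will be the Gaussianity step. Convergence of variances does not by itself force Gaussianity, and $\Phi^h$ is a genuinely non-Gaussian field. The subtleties are twofold: first, justifying the block CLT requires either all-order cumulant estimates or an association input for the continuum field, both of which must be pushed down from the lattice model and then survive the scaling limit; second, the random variables $X_L(s_k)$ involve delta functions in the time coordinate, so each ingredient of the argument (covariance computation, tail bounds, block decomposition) has to be carried out on a smooth approximant and passed to the limit along the lines of Appendix \ref{sec:pair}.
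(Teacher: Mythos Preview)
Your overall strategy matches the paper's: Newman's CLT for associated (FKG) sequences to get Gaussianity, then the K\"all\'en--Lehmann representation of $H$ integrated over $y$ to produce the Laplace-transform form of $K$. The spectral half is fine --- the $K_0$ form you write is just the two-dimensional free propagator, and integrating it in $y$ is exactly what the paper does (in the equivalent $\delta(m^2+p^2-E^2)$ notation), yielding $d\rho(m)=\pi m^{-1}\,d\tilde\rho(m)$.

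The genuine gap is in the Gaussianity step, and it is precisely the one you almost name at the end. Newman's CLT applies to stationary \emph{associated} sequences, and association is preserved only under \emph{nonnegative} linear combinations. When the Cram\'er--Wold coefficients $a_k$ have mixed signs, the block variables $\sum_k a_k\,\Phi^h(1_{[jB_L,(j+1)B_L)}\delta_{s_k})$ are no longer associated, and the FKG-based CLT does not apply. Exponential decay of correlations and Gaussian tail bounds do not rescue this: they control covariances and moments but do not supply the correlation inequality that drives Newman's argument. The paper resolves this by first running the CLT only for $(z_1,\ldots,z_n)\in(\mathbb{R}^+)^n$; then, for general signs, it splits $\sum_j z_j X_L(s_j) = W_L^+ - W_L^-$ into two nonnegative-coefficient pieces, notes that $aW_L^+ + bW_L^-$ is asymptotically Gaussian for every $a,b\ge 0$, and invokes a bivariate-normal characterization result (Hamedani--Tata \cite{HT75}) to deduce that the pair $(W_L^+,W_L^-)$ --- and hence the difference $W_L^+ - W_L^-$ --- is asymptotically jointly Gaussian. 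Your proposal needs some device of this sort; as written, the Cram\'er--Wold step is incomplete.

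Two minor remarks. First, the paper uses fixed unit blocks $Y_k=\sum_j z_j\Phi^h(1_{[k,k+1)}\delta_{s_j})$ rather than growing blocks of size $B_L$; Newman's theorem already does the work of summing an associated stationary sequence, so there is no need for $B_L\to\infty$. Second, the Gaussian tail bounds from \cite{NW19} play no role here --- Newman's CLT requires only stationarity, association, and finite susceptibility $\sum_k\operatorname{Cov}(Y_0,Y_k)<\infty$, the last of which follows directly from the exponential decay of $H$ established in \cite{CJN17}.
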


\begin{remark}
We believe, but have not yet proved, that as a process
\begin{equation}
\{X_L(s): 0\leq s\leq 1\}\Rightarrow \{X(s): 0\leq s\leq 1\},
\end{equation}
where ``$\Rightarrow$'' denotes convergence in distribution in the space $C[0,1]$ of continuous functions with the sup norm topology. However, if we define
\begin{equation}
Y_L(t):=\int_0^t X_L(s) ds,
\end{equation}
then one may apply a similar result as in Theorem 2 of \cite{NW81} to show that $Y_L(t)$ does converge in distribution to $\int_0^t X(s) ds$.
\end{remark}

\begin{remark}
The first part of Theorem \ref{thm:Gau} implies (by combining it with arguments like those in \cite{New80}) that as $\lambda\rightarrow\infty$
\begin{equation}
\sqrt{\lambda}\left[\Phi^h(t,\lambda y)-Bh^{1/15} \right]\Rightarrow \hat{\Phi}^h(t,y),
\end{equation}
where $B\in(0,\infty)$ is a constant (see \eqref{eq:mag} and \eqref{eq:expnoh} below) and $\hat{\Phi}^h$ is a mean zero Gaussian field with covariance function
\begin{equation}
\emph{Cov}\left(\hat{\Phi}^h(t_0,y_0), \hat{\Phi}^h(t_0+t, y_0+y)\right)=K(t)\delta_0(y)~\forall (t_0,y_0)\in \mathbb{R}^2.
\end{equation}
\end{remark}

\begin{remark}
$K$ and $m_1$ in Theorem \ref{thm:Gau} depend on $h$, with $m_1 = C h^{8/15}$ for some constant $C<\infty$ (see Corollary 1.6 of \cite{CJN17}).
\end{remark}


Note that $H$ is a function only of the radial variable $\sqrt{t^2+y^2}$. Our second result is about the small distance behavior of $H$ and $K$. Before stating our result, we recall the covariance function of $\Phi^0$. From Remark 1.4 of \cite{CHI15}, one has for all $(t,y)\in\mathbb{R}^2$ with $(t,y)\neq (0,0)$,
\begin{equation}\label{eq:H0}
H^0(t,y)=\text{Cov}\left(\Phi^0(0,0),\Phi^0(t,y)\right)=C_1(t^2+y^2)^{-1/8},
\end{equation}
where $C_1\in(0,\infty)$ is independent of $t$ and $y$.
\begin{theorem}\label{thm:HK}
\begin{equation}\label{eq:Hlimit}
\lim_{\lambda\downarrow0}\lambda^{1/4}H(0,\lambda y)=H^0(0,y)= C_1|y|^{-1/4},~y\in \mathbb{R}\setminus\{0\}.
\end{equation}
Moreover,
\begin{equation}\label{eq:Kbd}
\lim_{\epsilon\downarrow 0}\frac{K(0)-K(\epsilon)}{\epsilon^{3/4}}=2\int_0^{\infty}\left[H^0(0,y)-H^0(1,y)\right] dy\in(0,\infty).
\end{equation}
\end{theorem}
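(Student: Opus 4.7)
The plan is to derive \eqref{eq:Hlimit} directly from the scale covariance of the family $\{H^h\}_{h \geq 0}$, and to deduce \eqref{eq:Kbd} by a change of variables combined with a dominated-convergence argument. The field $\Phi^h$ inherits from its lattice approximation the scale covariance
\begin{equation*}
H^h(\lambda t, \lambda y) \;=\; \lambda^{-1/4}\, H^{h \lambda^{15/8}}(t, y), \qquad \lambda > 0,
\end{equation*}
which is the content of the forthcoming \eqref{eq:Hscl}. Specializing to $t = 0$ gives $\lambda^{1/4} H(0, \lambda y) = H^{h\lambda^{15/8}}(0, y)$, so \eqref{eq:Hlimit} reduces to the continuity $H^{h'}(0, y) \to H^0(0, y) = C_1|y|^{-1/4}$ as $h' \downarrow 0$ for each fixed $y \neq 0$. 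This continuity follows from the joint construction of $\{\Phi^{h'}\}_{h' \geq 0}$ in \cite{CGN15,CGN16} combined with the GHS-type monotonicity of the covariance in $h$, which yields $0 \leq H^{h'} \leq H^0$ and in fact $H^{h'} \uparrow H^0$ pointwise as $h' \downarrow 0$.

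For \eqref{eq:Kbd}, write $H(t, y) = H^{\ast}\!\bigl(\sqrt{t^2 + y^2}\bigr)$ using rotational symmetry. Substituting $y = \epsilon u$ in \eqref{eq:Kdef} and using evenness in $y$,
\begin{equation*}
\frac{K(0) - K(\epsilon)}{\epsilon^{3/4}} \;=\; 2 \int_0^{\infty} \epsilon^{1/4}\bigl[H^{\ast}(\epsilon u) - H^{\ast}\bigl(\epsilon \sqrt{1 + u^2}\bigr)\bigr]\, du.
\end{equation*}
Repeating the scaling argument without specializing to $t = 0$ gives $\epsilon^{1/4} H^{\ast}(\epsilon r) \to C_1 r^{-1/4}$ for every $r > 0$, so the integrand converges pointwise to $H^0(0, u) - H^0(1, u) = C_1[u^{-1/4} - (1 + u^2)^{-1/8}]$. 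This candidate limit is integrable on $(0, \infty)$, behaving like $C_1 u^{-1/4}$ near $0$ and like $(C_1/8)\,u^{-9/4}$ at infinity, and strictly positive; this identifies the right-hand side of \eqref{eq:Kbd}.

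The analytic core is justifying the interchange of limit and integral. I would split the $u$-integral at some large $A$. On $(0, A]$ the GHS-type bound $H^h \leq H^0$ provides the uniform envelopes $\epsilon^{1/4} H^{\ast}(\epsilon u) \leq C_1 u^{-1/4}$ and $\epsilon^{1/4} H^{\ast}\bigl(\epsilon\sqrt{1+u^2}\bigr) \leq C_1(1+u^2)^{-1/8}$, both $u$-integrable on $(0, A]$, so dominated convergence applies directly. On $(A, \infty)$ the individual terms fail to be uniformly integrable in $\epsilon$, and one must exploit the cancellation in the difference: using radial monotonicity of $H^{\ast}$ and the mean value theorem,
\begin{equation*}
0 \;\leq\; H^{\ast}(\epsilon u) - H^{\ast}\bigl(\epsilon\sqrt{1+u^2}\bigr) \;\leq\; -(H^{\ast})'(\epsilon u) \cdot \frac{\epsilon}{2u},
\end{equation*}
and the exponential-decay estimates from \cite{CJN17}, yielding $|(H^{\ast})'(r)| = O(e^{-m_1 r})$ at infinity, bound the tail contribution by $O(\epsilon^{5/4} |\log \epsilon|) \to 0$, uniformly for $A$ large. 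The main obstacle is precisely this tail estimate: extracting a pointwise derivative bound for $H^{\ast}$ from \cite{CJN17} is not entirely routine, and an alternative is to bound the difference $H^{\ast}(\epsilon u) - H^{\ast}\bigl(\epsilon\sqrt{1+u^2}\bigr)$ directly using the integral representation of $H^{\ast}$ arising from the mass spectral measure $\rho$ of Theorem \ref{thm:Gau}, thereby avoiding differentiation altogether.
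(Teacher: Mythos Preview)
Your overall structure---scaling to reduce \eqref{eq:Hlimit} to continuity of $H^{h'}$ at $h'=0$, then change of variables plus dominated convergence for \eqref{eq:Kbd}---matches the paper's. Two points deserve attention.

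For \eqref{eq:Hlimit}, the assertion that $H^{h'}\uparrow H^0$ pointwise as $h'\downarrow 0$ is the heart of the matter and is not immediate from \cite{CGN15,CGN16}. The GHS inequality gives monotonicity in $h'$ and hence existence of the limit together with the upper bound $\lim_{h'\downarrow 0}H^{h'}\leq H^0$, but the matching lower bound requires a separate argument. The paper obtains it by combining the FKG inequality, which yields $\mathbb{E}^h\bigl[\Phi^h(f_\epsilon)\Phi^h(f_\epsilon(\cdot-s,\cdot))\bigr]\geq \mathbb{E}^0\bigl[\Phi^0(f_\epsilon)\Phi^0(f_\epsilon(\cdot-s,\cdot))\bigr]$, with the computation $\mathbb{E}^h\Phi^h(f_\epsilon)=Bh^{1/15}$ independent of $\epsilon$ (via \cite{CJN19}); this gives $\hat H^h(s)\geq \hat H^0(s)-(Bh^{1/15})^2$, and multiplying by $s^{1/4}$ and letting $s\downarrow 0$ recovers the constant $C_1$. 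Your proposal glosses over this step, and ``joint construction'' alone does not supply it.

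For \eqref{eq:Kbd}, your splitting at $A$ with a derivative-based tail bound would work if $|(\hat H^h)'(r)|=O(e^{-m_1 r})$, but as you yourself flag, this is not established in \cite{CJN17} and extracting it from the spectral representation takes extra work. The paper bypasses the issue entirely with a telescoping trick: since $\sqrt{1+u^2}\leq u+1$ and $\hat H^{h'}$ is nonincreasing,
\[
0\;\leq\;\hat H^{h'}(u)-\hat H^{h'}\bigl(\sqrt{1+u^2}\bigr)\;\leq\;\hat H^{h'}(u)-\hat H^{h'}(u+1),
\]
and the dominating family on the right integrates over $(0,\infty)$ to $\int_0^1 \hat H^{h'}(u)\,du$, which converges monotonically to $\int_0^1 \hat H^0(u)\,du<\infty$ as $h'\downarrow 0$. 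The generalized dominated convergence theorem then applies on all of $(0,\infty)$ at once---no splitting, no derivative estimate. This is both simpler and uses only the radial monotonicity already in hand.
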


\begin{remark}
The limit \eqref{eq:Kbd} implies that $K(s)$ is not differentiable at~$0$. By a classic result of Fernique \cite{Fer64} (see also \cite{MS70}), \eqref{eq:Kbd} also implies that $X(t)$ has continuous sample paths. Loosely speaking, the sample path of $|X(t)-X(0)|$ behaves locally like $|t|^{3/8}$, which is rougher than a one-dimensional Brownian motion.
\end{remark}

The next result summarizes what the behavior of $K$ described in Theorem \ref{thm:HK} tells us about the mass spectral measure $\rho(m)$ of the relativistic quantum field theory associated with $\Phi^h$.
\begin{corollary}\label{cor:rho}
The mass spectral measure $\rho(m)$ in \eqref{eq:covariance} is a finite measure, but its first moment is infinite.
\end{corollary}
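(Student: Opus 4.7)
My plan is to extract both statements directly from the Laplace representation $K(s) = \int_{m_1}^\infty e^{-m|s|}\, d\rho(m)$ of Theorem \ref{thm:Gau}, playing it against the small-$\epsilon$ estimate \eqref{eq:Kbd} to pin down the divergence of the first moment. Before starting, I want to note that $K(0)$ is finite: this follows either from the fact that $X(0)$ is an $L^2$ random variable with variance $K(0)$, or from $K(0) = \int_{-\infty}^\infty H(0,y)\,dy$ together with the integrable near-zero behavior $|y|^{-1/4}$ supplied by \eqref{eq:Hlimit} and the exponential decay at infinity established in \cite{CJN17}.

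For finiteness of $\rho$, with $K(0) < \infty$ in hand and $e^{-m|s|} \uparrow 1$ as $s \downarrow 0$, monotone convergence applied to the Laplace representation gives
$$\rho([m_1, \infty)) = \lim_{s \downarrow 0} K(s) = K(0) < \infty,$$
where the second equality is continuity of $K$ at $0$, itself a consequence of \eqref{eq:Kbd}.

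For the infinite first moment I will argue by contradiction. Suppose $M := \int_{m_1}^\infty m\, d\rho(m) < \infty$. Using the elementary inequality $1 - e^{-m\epsilon} \leq m\epsilon$ valid for $m, \epsilon \geq 0$,
$$K(0) - K(\epsilon) = \int_{m_1}^\infty \bigl(1 - e^{-m\epsilon}\bigr)\, d\rho(m) \leq M\epsilon,$$
so $(K(0) - K(\epsilon))/\epsilon^{3/4} \leq M \epsilon^{1/4} \to 0$ as $\epsilon \downarrow 0$, directly contradicting the strictly positive limit in \eqref{eq:Kbd}. Hence $\int m \, d\rho(m) = \infty$. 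I do not expect any serious obstacle: the corollary is a clean monotone/dominated-convergence consequence of Theorem \ref{thm:HK}, the only delicate point being the a priori finiteness of $K(0)$, which Theorem \ref{thm:HK} itself supplies.
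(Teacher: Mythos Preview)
Your proof is correct and follows essentially the same approach as the paper: finiteness of $\rho$ from $K(0)<\infty$ via \eqref{eq:covariance}, and infinite first moment by contradiction with \eqref{eq:Kbd}. The only cosmetic difference is that the paper phrases the contradiction as ``$K$ would be differentiable at $s=0$,'' whereas you spell out the underlying inequality $1-e^{-m\epsilon}\le m\epsilon$ to reach the same conclusion.
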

\begin{proof}
Theorem \ref{thm:HK} implies that $K(0)<\infty$. This, together with \eqref{eq:covariance}, proves the first claim. The second claim follows from the observation that, if $\rho(m)$ had a finite first moment, $K(s)$ would be differentiable at $s=0$, contradicting \eqref{eq:Kbd} in Theorem \ref{thm:HK}.
\end{proof}

$K$ should actually capture much more information about the particle masses of the quantum field theory associated with $\Phi^h$. Based on \cite{Zam89a, Zam89b, Del04} (see, e.g., (4.60) of \cite{Del04}) and \eqref{eq:covariance}, one expects that there should be masses $m_1,m_2,m_3\in (0,\infty)$ and constants $B_1, B_2, B_3\in(0,\infty)$ such that, for large $t$,
\begin{equation}\label{eq:Kconj}
K(t)=B_1e^{-m_1|t|}+B_2e^{-m_2|t|}+B_3e^{-m_3|t|}+O\left(e^{-2m_1|t|}\right),
\end{equation}
where $m_1<m_2<m_3<2m_1$ and the $m_1$ here is the same as in~\eqref{eq:covariance}. In Appendix \ref{sec:largedist}, we discuss some related issues connecting large distance/time behavior of $H$ and $K$ to the existence of a gap above $m_1$ in the mass spectrum.



\section{Proof of the main results}

A key property of $H$ that we will use is that $H(0,y)$ is continuous on $\mathbb{R}\setminus\{0\}$. Indeed, because of the relation between Euclidean and quantum field theories, $H$ is real analytic in the half-plane $\{(t,y): t>0\}$. We will also use another important property: $H(0,y)$ is nonincreasing on $(0,\infty)$; this can be seen in a variety of ways --- e.g., by spectral representation arguments like those used below in the second part of the proof of Theorem~\ref{thm:Gau}.


\begin{proof}[Proof of Theorem \ref{thm:Gau}]
By the classical convergence theorem (see, e.g., p.167 of \cite{Dur05}), \eqref{eq:Gau} is equivalent to
\begin{equation}\label{eq:CF}
\lim_{L\rightarrow\infty}\mathbb{E}e^{i\left[z_1X_L(s_1)+\cdots+z_nX_L(s_n)\right]}=\mathbb{E}e^{i\left[z_1X(s_1)+\cdots+z_nX(s_n)\right]}
\end{equation}
for each $(z_1,\ldots, z_n)\in\mathbb{R}^n$.

We will first prove \eqref{eq:CF} for $(z_1,\ldots, z_n)\in(\mathbb{R}^+)^n$. Under the assumption that all $z_i$'s are nonnegative, we will show that the sequence
\begin{equation}
\{Y_k:=\sum_{j=1}^n z_j\Phi^h\left(1_{[k,k+1)}(y)\delta_{s_j}(t)\right): k\in\mathbb{Z}\}
\end{equation}
satisfies all the conditions in Theorem 2 of \cite{New80}. We have
\begin{align}\label{eq:CovY}
\text{Cov}(Y_0,Y_k)&=\sum_{j=1}^n\sum_{l=1}^n z_j z_l \text{Cov}\left(\Phi^h\left(1_{[0,1)}(y)\delta_{s_j}(t)\right),\Phi^h\left(1_{[k,k+1)}(y)\delta_{s_l}(t)\right)\right)\nonumber\\
&=\sum_{j=1}^n\sum_{l=1}^n z_j z_l\int_0^1\int_k^{k+1}H(s_l-s_j,y_2-y_1)dy_2dy_1\nonumber\\
&=\sum_{j=1}^n\sum_{l=1}^n z_j z_l\int_{k-1}^{k+1}H(s_l-s_j,u)\left[1-|u-k|\right]du.
\end{align}
Therefore, by the the continuity of $H$ on $\mathbb{R}\setminus\{0\}$,
\begin{equation}
\text{Var}(Y_k)=\text{Var}(Y_0)=\sum_{j=1}^n\sum_{l=1}^n z_j z_l\int_{-1}^{1}H(s_l-s_j,u)\left[1-|u|\right]du<\infty.
\end{equation}

The translation invariance of $\{Y_k: k\in\mathbb{Z}\}$ follows from the translation invariance of $\Phi^h$. The FKG inequality for this sequence follows from the fact that $\Phi^h$ is the scaling limit of the renormalized magnetization field and the later has the FKG inequality. Note that the FKG inequality only holds when $z_1,\ldots,z_n$ are all non-negative and this is why we prove \eqref{eq:CF} first for non-negative $z_j$'s. The last condition we need to check is the finiteness of the ``susceptibility":
\begin{align}\label{eq:A}
A:=\sum_{k\in\mathbb{Z}}\text{Cov}(Y_0,Y_k)=\sum_{j=1}^n\sum_{l=1}^n z_j z_l\int_{-\infty}^{\infty}H(s_l-s_j,u)du,
\end{align}
where we have used \eqref{eq:CovY} and some simplifications. Clearly, \eqref{eq:A} is finite by the exponential decay of $H$ (see \cite{CJN17}) and the continuity of $H$ on $\mathbb{R}\setminus\{0\}$. We can now apply Theorem 2 of \cite{New80} to show that for each $(z_1,\ldots, z_n)\in(\mathbb{R}^+)^n$ and $r\in\mathbb{R}$,
\begin{align}\label{eq:CF1}
&\lim_{L\rightarrow\infty}\mathbb{E}e^{ir\left[z_1X_L(s_1)+\ldots+z_nX_L(s_n)\right]}=e^{-Ar^2/2}\nonumber\\
&\quad=\exp\left(-\frac{r^2}{2}\sum_{j=1}^n\sum_{l=1}^n z_j z_l\text{Cov}\left(X(s_j),X(s_l)\right)\right)
\end{align}
where we have used \eqref{eq:A} and \eqref{eq:Kdef} in the last equality.

Next, we will show that \eqref{eq:CF} actually holds for each $(z_1,\ldots, z_n)\in\mathbb{R}^n$, which would complete the proof of the first part of the theorem. For fixed  $(z_1,\ldots, z_n)\in\mathbb{R}^n$, we define
\begin{equation}
W_L^+:=\sum_{j:z_j\geq 0} z_jX_L(s_j), W_L^-:=\sum_{j:z_j< 0} |z_j|X_L(s_j).
\end{equation}
We just proved in \eqref{eq:CF1} that for any $a\geq 0$ and $b\geq 0$, $aW_L^++bW_L^-$ converges (as $L\rightarrow\infty$) in distribution to a Gaussian random variable with mean $0$ and variance
\begin{equation}\label{eq:Varm}
\sum_{j=1}^n\sum_{l=1}^n(a1_{z_j\geq 0}+b1_{z_j<0})(a1_{z_l\geq 0}+b1_{z_l<0})|z_jz_l|\text{Cov}(X(s_j),X(s_l)).
\end{equation}
In particular, this implies that $\{(W_L^+,W_L^-): L>0\}$ is tight as $L\rightarrow\infty$. Let $(Z^+,Z^-)$ be a subsequential limit in distribution of $\{(W_L^+,W_L^-): L>0\}$ as $L\rightarrow\infty$. For $a\geq 0$ and $b\geq 0$, $aZ^++bZ^-$ is a mean zero normal random variable with variance given by \eqref{eq:Varm}. By Theorem 3 of \cite{HT75}, we know that $(Z^+, Z^-)$ is a bivariate normal vector whose distribution is determined by $aZ^++bZ^-$ for $a,b\geq 0$. Therefore all subsequential limits agree and so
\begin{equation}
(W_L^+,W_L^-)\Rightarrow (Z^+,Z^-) \text{ as }L\rightarrow\infty,
\end{equation}
where ``$\Rightarrow$" denotes convergence in distribution. By the continuous mapping theorem,
\begin{equation}
aW_L^++bW_L^-\Rightarrow aZ^++bZ^- \text{ as }L\rightarrow\infty \text{ for each }a, b\in\mathbb{R}.
\end{equation}
Setting $a=1$ and $b=-1$, we proved \eqref{eq:CF} for each $(z_1,\ldots, z_n)\in\mathbb{R}^n$.

For the second part of the theorem, by the K\"all\'en-Lehmann spectral formula (valid for Euclidean fields satisfying the Osterwalder-Schrader axioms --- see Theorem 6.2.4 of \cite{GJ87}), we have
\begin{equation}\label{eq:Hrep}
H(s,y)= \int_{0}^{\infty} \Big( \int_{-\infty}^{\infty} \int_{0}^{\infty} e^{ipy} e^{-E|s|} \delta(m^2+p^2-E^2) dE dp \Big) d\tilde\rho(m),
\end{equation}
where $\tilde\rho(m)$ is a mass spectral measure of the relativistic quantum field theory obtained from $\Phi^h$ via the Osterwalder-Schrader reconstruction theorem \cite{OS73}.

Note that for fixed $p$ and $m$,
\begin{equation}\label{eq:delta}
\delta(m^2+p^2-E^2) =\frac{\delta(\sqrt{m^2+p^2}+E)+\delta(\sqrt{m^2+p^2}-E)}{2\sqrt{m^2+p^2}}.
\end{equation}
Combining \eqref{eq:Hrep}, \eqref{eq:delta} and \eqref{eq:Kdef}, we get that for any $s\neq 0$,
\begin{align}\label{eq:Kint1}
&K(s)= \int_{-\infty}^{\infty} H(s,y) dy \nonumber\\
&=\int_{-\infty}^{\infty} \Big[ \int_{0}^{\infty} \Big( \int_{-\infty}^{\infty} \int_{0}^{\infty} e^{ipx} e^{-E|s|} \frac{\delta(\sqrt{m^2+p^2}-E)}{2\sqrt{m^2+p^2}} dE dp \Big) d\tilde\rho(m) \Big] dy\nonumber\\
&= \pi \int_{0}^{\infty} \frac{e^{-|s|m}}{m} d\tilde\rho(m).
\end{align}
Then the continuity of $K(s)$ in $s$, the monotone convergence theorem and the last displayed equation imply that
\begin{equation}\label{eq:Kdiff}
K(s)= \pi \int_{0}^{\infty} \frac{e^{-|s|m}}{m}  d\tilde\rho(m),~\forall s\in\mathbb{R}.
\end{equation}

By Theorem 2 and Remark 3 in \cite{CJN17}, the support of $\tilde\rho$ is in $[m_1,\infty)$ for some $m_1>0$. Now we define a new measure $\rho$ by the Radon-Nikodym derivative
\begin{equation}\label{eq:rho}
\frac{d\rho(m)}{d\tilde\rho(m)}=\frac{\pi}{m}.
\end{equation}
Then \eqref{eq:covariance} follows from \eqref{eq:Kdiff} and \eqref{eq:rho}.
\end{proof}

The main ingredient for the proof of the Theorem \ref{thm:HK} is the scaling relation for $\Phi^h$ which was proved in \cite{CJN17}.
\begin{proof}[Proof of Theorem \ref{thm:HK}]
By Theorem 5 of \cite{CJN17},
\begin{equation}
\lambda^{1/8}\Phi^h(\lambda x)\overset{d}{=}\Phi^{\lambda^{15/8}h}(x) \text{ for any } x\in\mathbb{R}^2, h>0, \lambda>0,
\end{equation}
where $\overset{d}{=}$ means equal in distribution. Hence for any $t, y\in\mathbb{R}$,
\begin{equation}\label{eq:CovPhi}
\text{Cov}\left(\lambda^{1/8}\Phi^h(\lambda t), \lambda^{1/8}\Phi^h(\lambda y)\right)=\text{Cov}\left(\Phi^{\lambda^{15/8}h}(t), \Phi^{\lambda^{15/8}h}(y)\right).
\end{equation}
Here the $\text{Cov}$'s refer to the covariance function of $\Phi^h$ and $\Phi^{\lambda^{15/8}h}$ respectively, which will be denoted by $H^h$ and $H^{\lambda^{15/8}h}$ in the rest of the proof. Setting $t=0$, we get
\begin{equation}\label{eq:Hscl}
\lambda^{1/4}H^h(0,\lambda y )=H^{\lambda^{15/8}h}(0,y).
\end{equation}

As $H$ is a function only of the radial variable, we define
\begin{equation}\label{eq:Hhat}
 \hat{H}(\sqrt{t^2+y^2})=
\hat{H}^h(\sqrt{t^2+y^2}):=H^h(t,y),~\forall (t,y)\in\mathbb{R}^2.
\end{equation}
By setting $y=1$ and taking a limit in \eqref{eq:Hscl}, we get
\begin{equation}\label{eq:Hhlimit}
\lim_{\lambda\downarrow 0}\lambda^{1/4}\hat{H}^h(\lambda )=\lim_{\lambda\downarrow 0}\hat{H}^{\lambda^{15/8}h}(1),
\end{equation}
provided that the limit on the RHS exists. This is indeed the case since by the GHS inequality \cite{GHS70}, $\hat{H}^h$ is decreasing in $h$. Let us denote this limit by $\tilde{C_1}$. The GHS inequality also implies $\tilde{C_1}\leq C_1$ where $C_1$ is defined in \eqref{eq:H0}.
Next, we will show $\tilde{C}_1\geq C_1$. We first define $f_{\epsilon}(t,y)=\epsilon^{-2}1_{[-\epsilon/2,\epsilon/2]}(t)1_{[-\epsilon/2,\epsilon/2]}(y)$. Then using the analyticity of $\hat{H}^h(r)$ on $\mathbb{R}\setminus\{0\}$, it is not hard to show that for any $h\geq 0$ and $s\in\mathbb{R}\setminus\{0\}$,
\begin{equation}\label{eq:Cov1}
\lim_{\epsilon\downarrow 0}\text{Cov}\left(\Phi^h\left(f_{\epsilon}(t,y)\right), \Phi^h\left(f_{\epsilon}(t-s,y)\right)\right)=\hat{H}^h(s).
\end{equation}

Note that
\begin{align}\label{eq:Cov2}
& \text{Cov}\left(\Phi^h\left(f_{\epsilon}(t,y)\right), \Phi^h\left(f_{\epsilon}(t-s,y)\right)\right)\nonumber \\
& \quad = \mathbb{E}^h\left(\Phi^h\left(f_{\epsilon}(t,y)\right)\Phi^h\left(f_{\epsilon}(t-s,y)\right)\right)-\left[\mathbb{E}^h\left(\Phi^h\left(f_{\epsilon}(t,y)\right)\right)\right]^2\nonumber \\
& \quad \geq \mathbb{E}^0\left(\Phi^0\left(f_{\epsilon}(y)\delta_0(t)\right)\Phi^0\left(f_{\epsilon}(y)\delta_s(t)\right)\right)-\left[\mathbb{E}^h\left(\Phi^h\left(f_{\epsilon}(t,y)\right)\right)\right]^2,
\end{align}
where we have applied the FKG inequality in the last inequality. We next apply Theorem 4 of \cite{CJN19} to prove that $\mathbb{E}^h\left(\Phi^h\left(f_{\epsilon}(t,y)\right)\right)$ is independent of $\epsilon$. One can rephrase Theorem 4 of \cite{CJN19} as follows:
\begin{equation}\label{eq:mag}
\lim_{a\downarrow 0}\frac{\langle \sigma_0\rangle_{a,h}}{a^{1/8}}=Bh^{1/15},
\end{equation}
where $\langle\cdot\rangle_{a,h}$ is the expectation for the critical Ising model on $a\mathbb{Z}^2$ with external field $a^{15/8}h$ and $B\in(0,\infty)$. The argument right after (16) of \cite{CJN19} implies that
\begin{equation}\label{eq:expnoh0}
\mathbb{E}^h\left(\Phi^h\left(f_{\epsilon}(t,y)\right)\right)=\lim_{a\downarrow 0}\epsilon^{-2}a^{15/8}\left\langle\sum_{x\in Q^a_{\epsilon}}\sigma_x\right\rangle_{a,h},
\end{equation}
where $Q^a_{\epsilon}:=a\mathbb{Z}^2\cap[-\epsilon/2,\epsilon/2]^2$. Let $\mathcal{N}(a,\epsilon)$ be the cardinality of $Q^a_{\epsilon}$. Then it is clear that, for each $\epsilon>0$,
\begin{equation}\label{eq:cad}
\lim_{a\downarrow 0}\frac{\mathcal{N}(a,\epsilon)}{(\epsilon/a)^2}=1.
\end{equation}
By \eqref{eq:expnoh0} and translation invariance, we have
\begin{align}\label{eq:expnoh}
\mathbb{E}^h\left(\Phi^h\left(f_{\epsilon}(t,y)\right)\right)
&=\lim_{a\downarrow 0}\epsilon^{-2}a^{15/8}\mathcal{N}(a,\epsilon)\langle\sigma_0\rangle_{a,h}\nonumber\\
&=\lim_{a\downarrow 0}\frac{\mathcal{N}(a,\epsilon)}{(\epsilon/a)^2}\frac{\langle\sigma_0\rangle_{a,h}}{a^{1/8}}\nonumber\\
&=Bh^{1/15},
\end{align}
where we have used \eqref{eq:mag} and \eqref{eq:cad} in the last equality.

Taking $\epsilon\downarrow 0$ in \eqref{eq:Cov2} and using \eqref{eq:Cov1} and \eqref{eq:expnoh}, we get
\begin{equation}
\hat{H}^h(s)\geq \hat{H}^0(s)-(Bh^{1/15})^2.
\end{equation}
Multiplying each side of the last displayed inequality by $s^{1/4}$ and setting $s\downarrow0$, we obtain $\tilde{C}_1\geq C_1$. Therefore,
\begin{equation}
\lim_{\lambda\downarrow 0}\lambda^{1/4}\hat{H}^h(\lambda )=C_1.
\end{equation}
This completes the proof of \eqref{eq:Hlimit} with $\lambda$ replaced by $\lambda |y|$ for $y\neq 0$.

By the change of variable $y=u\epsilon$ and \eqref{eq:Hscl}, we have
\begin{align}\label{eq:HH2}
& \frac{\int_{0}^{\infty} \left[ \hat{H}^h(y)-\hat{H}^h(\sqrt{y^2+\epsilon^2})\right]dy}{\epsilon^{3/4}} \nonumber \\
& \quad = \frac{\int_{0}^{\infty} \left[ \hat{H}^h(u\epsilon)-\hat{H}^h(\epsilon\sqrt{u^2+1})\right]du}{\epsilon^{-1/4}}\nonumber \\
& \quad =\int_{0}^{\infty} \left[ \hat{H}^{\epsilon^{15/8}h}(u)-\hat{H}^{\epsilon^{15/8}h}(\sqrt{u^2+1})\right]du.
\end{align}
Note that
\begin{align}
\int_{0}^{\infty} \left[ \hat{H}^{\epsilon^{15/8}h}(u)-\hat{H}^{\epsilon^{15/8}h}(u+1)\right]du
= \int_{0}^{1} \hat{H}^{\epsilon^{15/8}h}(u)du.
\end{align}
By the GHS inequality, \eqref{eq:Hscl} and \eqref{eq:Hlimit}, $\hat{H}^{\epsilon^{15/8}h}(u)\uparrow \hat{H}^0(u)$ as $\epsilon\downarrow 0$ for each $u\neq 0$. So the monotone convergence theorem implies
\begin{equation}
\lim_{\epsilon\downarrow 0}\int_{0}^{1} \hat{H}^{\epsilon^{15/8}h}(u)du=\int_0^1 \hat{H}^0(y)dy<\infty,
\end{equation}
where the last inequality follows from \eqref{eq:Hlimit}. To summarize, we just proved
\begin{align}
&\lim_{\epsilon\downarrow 0}\int_{0}^{\infty} \left[ \hat{H}^{\epsilon^{15/8}h}(u)-\hat{H}^{\epsilon^{15/8}h}(u+1)\right]du\nonumber\\
& \quad = \int_{0}^{\infty} \left[ \hat{H}^{0}(u)-\hat{H}^{0}(u+1)\right]du=\int_0^1 \hat{H}^0(u)du<\infty.
\end{align}

By the monotonicity of $\hat{H}$, we have for each $u>0$,
\begin{equation}
\hat{H}^{\epsilon^{15/8}h}(u)-\hat{H}^{\epsilon^{15/8}h}(\sqrt{u^2+1})\leq \hat{H}^{\epsilon^{15/8}h}(u)-\hat{H}^{\epsilon^{15/8}h}(u+1).
\end{equation}
Hence, by the generalized dominated convergence theorem (see Exercise~20 in Section 2.3 of \cite{Fol99}), we have
\begin{align} \label{eq:finitelim}
& \lim_{\epsilon\downarrow 0}\int_{0}^{\infty} \left[ \hat{H}^{\epsilon^{15/8}h}(u)-\hat{H}^{\epsilon^{15/8}h}(\sqrt{u^2+1})\right]du\nonumber \nonumber \\
& \quad = \int_{0}^{\infty}\left[\hat{H}^0(u)-\hat{H}^0(\sqrt{1+u^2})\right]du\leq \int_0^1 \hat{H}^0(u)du<\infty.
\end{align}
This, \eqref{eq:Kdef}, \eqref{eq:Hhat} and \eqref{eq:HH2} complete the proof of \eqref{eq:Kbd}.
\end{proof}

\appendix

\section{$\Phi^h$ paired with a 1-d delta function}\label{sec:pair}
\renewcommand*{\theproposition}{\Alph{proposition}}
\renewcommand*{\thelemma}{\Alph{lemma}}
\renewcommand*{\theremark}{\Alph{remark}}
\setcounter{proposition}{0}
\setcounter{lemma}{0}
\setcounter{remark}{0}
In \cite{CGN16}, it was shown that $\Phi^h(f)$ is a well-defined random variable for any $f$ in the Sobolev space $\mathcal{H}^{-3}(\mathbb{R}^2)$. This was later generalized  in \cite{FM17} to any $f$ in the Besov space $\mathcal{B}_{p,q}^{-\frac{1}{8}-\epsilon,\text{loc}}(\mathbb{R}^2)$ where $\epsilon>0$ and $p,q\in[1,\infty]$. But the test function $1_{[-L,L]}(y)\delta_s(t)$ is in neither of those two spaces. The next lemma justifies the pairing of $\Phi^h$ with such a test function.
\begin{lemma}\label{lem:X_L}
For any $\epsilon>0$, let $g_{\epsilon}$ be the probability density function of $N(0,\epsilon)$ (i.e., Gaussian with mean $0$ and variance $\epsilon$). For any $f\in L^1(\mathbb{R})\cap  L^2(\mathbb{R})$, we have that
\begin{equation}
\{\Phi^h(f(y)g_{\epsilon}(t)); \epsilon>0\} \text{ is a Cauchy sequence in }L^2.
\end{equation}
\end{lemma}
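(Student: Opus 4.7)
The plan is to write $\|\Phi^h(\phi_\epsilon)-\Phi^h(\phi_{\epsilon'})\|_2^2$ explicitly via the covariance structure and conclude by dominated convergence, where $\phi_\epsilon(t,y):=f(y)g_\epsilon(t)$. First, since $g_\epsilon$ is Schwartz and $f\in L^1\cap L^2(\mathbb{R})$, the product $\phi_\epsilon$ lies in $L^1\cap L^2(\mathbb{R}^2)\subset \mathcal{H}^{-3}(\mathbb{R}^2)$, so each $\Phi^h(\phi_\epsilon)$ is a well-defined $L^2$ random variable by \cite{CGN16}. Translation invariance of $\Phi^h$ together with $\int g_\epsilon\,dt=1$ makes $\mathbb{E}[\Phi^h(\phi_\epsilon)]=Bh^{1/15}\int f(y)\,dy$ independent of $\epsilon$, so it suffices to show that $\mathrm{Var}(\Phi^h(\phi_\epsilon-\phi_{\epsilon'}))\to 0$ as $\epsilon,\epsilon'\downarrow 0$.

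Next I would insert the K\"all\'en--Lehmann representation \eqref{eq:Hrep} into the double integral
\[
\mathrm{Var}\bigl(\Phi^h(\phi_\epsilon-\phi_{\epsilon'})\bigr)=\iint(g_\epsilon-g_{\epsilon'})(t_1)(g_\epsilon-g_{\epsilon'})(t_2)f(y_1)f(y_2)H(t_2-t_1,y_2-y_1)\,dt_1dt_2dy_1dy_2,
\]
which exhibits the variance as a superposition over $d\tilde\rho(m)$ of free-field covariances of mass $m$. Performing the $y$- and $t$-integrations by Parseval (using $\widehat{g_\epsilon}(\omega)=e^{-\epsilon\omega^2/2}$ and the spectral density $(2\pi)^{-1}(\omega^2+p^2+m^2)^{-1}$ of the mass-$m$ free field) reduces the variance to
\[
\frac{1}{2\pi}\int_0^\infty d\tilde\rho(m)\iint\frac{|\hat f(p)|^2\,\bigl|e^{-\epsilon\omega^2/2}-e^{-\epsilon'\omega^2/2}\bigr|^2}{\omega^2+p^2+m^2}\,d\omega\,dp.
\]
The integrand converges pointwise to $0$ and is uniformly dominated by $4|\hat f(p)|^2/(\omega^2+p^2+m^2)$.

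The crux, and the step I expect to be the main obstacle, is establishing $\mu$-integrability of this dominating function. Carrying out the $\omega$-integration gives $\pi/\sqrt{p^2+m^2}$, and using \eqref{eq:rho} in the form $d\tilde\rho(m)=(m/\pi)\,d\rho(m)$ together with the elementary bound $m/\sqrt{p^2+m^2}\leq 1$ leaves a bound proportional to $\rho([m_1,\infty))\,\|\hat f\|_{L^2}^2$. This is finite by Plancherel (since $f\in L^2$) and by Corollary \ref{cor:rho}, which asserts $\rho([m_1,\infty))=K(0)<\infty$. There is no circularity, since Corollary \ref{cor:rho} follows from Theorem \ref{thm:HK}, whose proof relies only on the scaling relation and the GHS inequality, not on Lemma \ref{lem:X_L}. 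Dominated convergence then yields $\mathrm{Var}(\Phi^h(\phi_\epsilon-\phi_{\epsilon'}))\to 0$ as $\epsilon,\epsilon'\downarrow 0$, which is exactly the Cauchy property claimed.
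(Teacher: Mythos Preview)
Your argument is correct. It differs genuinely from the paper's proof, so a short comparison is in order.

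The paper works entirely in real space: after noting (as you do) that the mean is independent of $\epsilon$, it rewrites the variance via the convolution identity $g_\epsilon * g_{\tilde\epsilon}=g_{\epsilon+\tilde\epsilon}$ as
\[
\int_{\mathbb{R}^2}\frac{h(u)}{\sqrt{2\pi}}e^{-v^2/2}\bigl[H(\sqrt{2\epsilon}\,v,u)+H(\sqrt{2\tilde\epsilon}\,v,u)-2H(\sqrt{\epsilon+\tilde\epsilon}\,v,u)\bigr]\,dv\,du,
\]
where $h=f*\check f$, and then controls this by the radial monotonicity bound $|\cdot|\le 4H(0,u)$, the local behavior $H(0,u)=O(|u|^{-1/4})$ from Theorem~\ref{thm:HK}, exponential decay at infinity, and uniform continuity of $H$ on compacta away from the origin. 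Your route instead passes to Fourier variables and invokes the K\"all\'en--Lehmann representation~\eqref{eq:Hrep} to reduce everything to the single spectral integral you display; the dominating function is then handled by the clean inequality $m/\sqrt{p^2+m^2}\le 1$ together with $\rho([m_1,\infty))=K(0)<\infty$ from Corollary~\ref{cor:rho}. Both arguments ultimately rest on Theorem~\ref{thm:HK} (your route via $K(0)<\infty$, the paper's via the $|u|^{-1/4}$ asymptotic), and you are right that no circularity is introduced: the proof of Theorem~\ref{thm:HK} uses only box-function test functions $f_\epsilon\in L^2(\mathbb{R}^2)$ and never appeals to Lemma~\ref{lem:X_L}. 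Your spectral argument is more compact and makes the role of the mass gap ($m\ge m_1>0$) transparent; the paper's argument is more self-contained in that it does not require the Fourier diagonalization of the covariance and would adapt to settings where a clean K\"all\'en--Lehmann form is not available.
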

\begin{remark}
Lemma \ref{lem:X_L} holds for more general probability density functions than $g_{\epsilon}$; we choose a Gaussian distribution to simplify the proof.
\end{remark}

\begin{proof}
For $f\in L^1(\mathbb{R})\cap  L^2(\mathbb{R})$, let
\begin{equation}\label{eq:hdef}
h(u):=\int_{\mathbb{R}} f(y)f(u+y) dy.
\end{equation}
Our assumption on $f$ implies $|h(u)|\leq \|f\|_2$ for each $u\in\mathbb{R}$. Therefore,
\begin{equation}\label{eq:hint}
\int_{\mathbb{R}}|h(u)|H(0,u)du<\infty,
\end{equation}
where the last inequality follows from the exponential decay of $H(0,u)$ for large $|u|$ and $H(0,|u|)=O(|u|^{-1/4})$ as $|u|\downarrow 0$ (see Theorem~\ref{thm:HK}).

In the next calculation we use the fact that the random variables $\Phi^h(f(y)g_{\epsilon}(t))$ and $\Phi^h(f(y)g_{\tilde{\epsilon}}(t))$ have the same expectation, which can be proved with an argument analogous to that used in the proof of Theorem \ref{thm:HK} to show that $\mathbb{E}^h\left(\Phi^h\left(f_{\epsilon}(t,y)\right)\right)$ is independent of $\epsilon$:
\begin{align}\label{eq:X_L1}
& \mathbb{E}\left[\Phi^h(f(y)g_{\epsilon}(t))-\Phi^h(f(y)g_{\tilde{\epsilon}}(t))\right]^2=\mathbb{E}\left[\Phi^h\left(f(y)g_{\epsilon}(t)-f(y)g_{\tilde{\epsilon}}(t)\right)\right]^2\nonumber\\
& \quad = \int_{\mathbb{R}^2}\int_{\mathbb{R}^2}\left[f(y_1)g_{\epsilon}(t_1)-f(y_1)g_{\tilde{\epsilon}}(t_1)\right]\left[f(y_2)g_{\epsilon}(t_2)-f(y_2)g_{\tilde{\epsilon}}(t_2)\right]\nonumber\\
&\qquad\qquad\qquad \times H(t_2-t_1, y_2-y_1)dt_2 dy_2 dt_1 dy_1.
\end{align}

By the change of variables $u=y_2-y_1, v=t_2-t_1, w=y_1, x=t_1$, this equals
\begin{equation}\label{eq:X_L2}
\int_{\mathbb{R}^2}h(u)H(v,u)\left[G_{\epsilon,\epsilon}(v)+G_{\tilde{\epsilon},\tilde{\epsilon}}(v)-G_{\epsilon,\tilde{\epsilon}}(v)-G_{\tilde{\epsilon},\epsilon}(v)\right]dv du,
\end{equation}
where $h(u)$ is as in \eqref{eq:hdef} and $G_{\epsilon,\tilde{\epsilon}}:=g_{\epsilon}*g_{\tilde{\epsilon}}$ is the convolution. (We have used the fact that $g_{\epsilon}$ is even). Since $g_{\epsilon}$ is the density of $N(0,\epsilon)$, we have
\begin{equation}
G_{\epsilon,\tilde{\epsilon}}=g_{\epsilon+\tilde{\epsilon}}.
\end{equation}
Another change of variables and using the explicit formula for $g_{\epsilon}$ give that \eqref{eq:X_L2} equals
\begin{equation}\label{eq:X_L4}
\int_{\mathbb{R}^2} \frac{h(u)}{\sqrt{2\pi}}e^{-v^2/2}\left[H(\sqrt{2\epsilon}v,u)+H(\sqrt{2\tilde{\epsilon}}v,u)-2H(\sqrt{\epsilon+\tilde{\epsilon}}v,u)\right]dv du.
\end{equation}

Note that by the monotonicity of $H$,
\begin{equation}
\left|[H(\sqrt{2\epsilon}v,u)+H(\sqrt{2\tilde{\epsilon}}v,u)-2H(\sqrt{\epsilon+\tilde{\epsilon}}v,u)\right|\leq 4H(0,u).
\end{equation}
For any $\eta>0$, by \eqref{eq:hint}, one may choose $\xi>0$ and $N>\xi$ such that
\begin{align}\label{eq:X_L6}
&4\int_{ \{u:|u|<\xi \text{ or } |u|>N\} }\int_{\mathbb{R}}\frac{|h(u)|}{\sqrt{2\pi}}e^{-v^2/2}H(0,u) dv du \nonumber\\
&\qquad + 4\int_{\mathbb{R}}\int_{\{v: |v|>N\}}\frac{|h(u)|}{\sqrt{2\pi}}e^{-v^2/2}H(0,u) dv du<\eta/2.
\end{align}
Since
\begin{align*}
& \left| H(\sqrt{2\epsilon}v,u)+H(\sqrt{2\tilde{\epsilon}}v,u)-2H(\sqrt{\epsilon+\tilde{\epsilon}}v,u)\right|\nonumber\\
& \quad = \left| H(0,\sqrt{u^2+2\epsilon v^2})+H(0,\sqrt{u^2+2\tilde{\epsilon} v^2})-2H(0,\sqrt{u^2+(\epsilon+\tilde{\epsilon}) v^2})\right|
\end{align*}
is uniformly continuous on $\{(u,v):\xi\leq|u|\leq N, |v|\leq N\}$, we have for all small enough $\epsilon$ and $\tilde{\epsilon}$,
\begin{align}
&\int_{\{u:\xi\leq |u|\leq N\}}\int_{\{v:|v|\leq N\}}\frac{|h(u)|}{\sqrt{2\pi}}e^{-v^2/2}\Big| H(\sqrt{2\epsilon}v,u)+H(\sqrt{2\tilde{\epsilon}}v,u)-\nonumber\\
&\qquad\qquad \qquad2H(\sqrt{\epsilon+\tilde{\epsilon}}v,u)\Big| dvdu < \eta/2.
\end{align}
This combined with \eqref{eq:X_L4}-\eqref{eq:X_L6} completes the proof of the lemma.
\end{proof}

\section{Large distance/time behavior of $H$ and $K$}\label{sec:largedist}
In this appendix, we first 
derive the large $t$ behavior of $\hat{H}(t)$ (recall the definition of $\hat{H}$ in \eqref{eq:Hhat}), based on the conjecture that the mass spectrum has an upper gap $(m_1,m_1+\epsilon)$ for some $\epsilon>0$. Then, based on this behavior, we prove the (first order) large time behavior for~ $K$.

\begin{proposition}
Suppose that $\tilde{\rho}\left((m_1,m_1+\epsilon)\right)=0$ for some $\epsilon>0$ where $\tilde{\rho}$ is defined in \eqref{eq:Hrep}. Then there exists a constant $C_3\in(0,\infty)$ such that
\begin{equation}\label{eq:Hlarge}
\lim_{t\rightarrow\infty}\frac{\hat{H}(t)}{t^{-1/2}e^{-m_1t}}=C_3:=\tilde{\rho}\left(\{m_1\}\right)\sqrt{\pi/(2m_1)}.
\end{equation}
\end{proposition}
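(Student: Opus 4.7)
The approach is to recast $\hat{H}(t)$ as an integral of modified Bessel functions against $\tilde{\rho}$ and then extract the leading contribution from the atom at $m_1$. First, setting $y=0$ in the spectral representation (\ref{eq:Hrep}) and using the delta function identity (\ref{eq:delta}) to carry out the $E$-integral, the remaining $p$-integral can be identified, via the substitution $p = m\sqrt{u^2-1}$ and the integral representation $K_0(z) = \int_1^\infty (s^2-1)^{-1/2} e^{-zs}\,ds$, as $K_0(mt)$. Thus
\begin{equation*}
\hat{H}(t) = \int_{m_1}^\infty K_0(mt)\, d\tilde{\rho}(m).
\end{equation*}

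Next, I would invoke the gap hypothesis to decompose $\tilde{\rho}$ into its atom at $m_1$ plus its restriction to $[m_1+\epsilon,\infty)$, yielding $\hat{H}(t) = \tilde{\rho}(\{m_1\}) K_0(m_1 t) + R(t)$ with $R(t) := \int_{m_1+\epsilon}^\infty K_0(mt)\, d\tilde{\rho}(m)$. The standard large-argument asymptotic $K_0(z) \sim \sqrt{\pi/(2z)}\,e^{-z}$ as $z\to\infty$ immediately gives $\tilde{\rho}(\{m_1\}) K_0(m_1 t) \sim C_3 t^{-1/2} e^{-m_1 t}$, matching the target constant; the claim thus reduces to showing $R(t) = o(t^{-1/2} e^{-m_1 t})$.

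For the remainder I would use the uniform bound $K_0(z) \leq C z^{-1/2} e^{-z}$ valid for $z$ bounded below. The main obstacle here is that $\tilde{\rho}$ is itself an \emph{infinite} measure: by Corollary~\ref{cor:rho} and the identity $d\tilde{\rho} = (m/\pi)\,d\rho$, one has $\tilde{\rho}([m_1,\infty)) = \pi^{-1}\int m\, d\rho(m) = \infty$, so a naive bound via monotonicity of $K_0$ and the total mass of $\tilde{\rho}$ on $[m_1+\epsilon,\infty)$ fails. Instead, fixing some $t_0>0$ with $(m_1+\epsilon)t_0\geq 1$ and writing $e^{-mt} = e^{-m t_0}\,e^{-m(t-t_0)}$, the second factor is bounded by $e^{-(m_1+\epsilon)(t-t_0)}$ for $m \geq m_1+\epsilon$, giving
\begin{equation*}
R(t) \leq C\, t^{-1/2}\, e^{-(m_1+\epsilon)(t-t_0)} \int_{m_1+\epsilon}^\infty m^{-1/2} e^{-m t_0}\, d\tilde{\rho}(m).
\end{equation*}
The last integral is finite because $m^{-1/2} e^{-m t_0} \leq \bigl(\sup_{m\geq m_1} m^{1/2}e^{-m t_0/2}\bigr)\, m^{-1} e^{-m t_0/2}$ and $\int m^{-1} e^{-m t_0/2}\,d\tilde{\rho}(m) = K(t_0/2)/\pi < \infty$ by (\ref{eq:Kdiff}). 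Since $\epsilon>0$, $R(t) = O\bigl(t^{-1/2} e^{-(m_1+\epsilon)t}\bigr) = o\bigl(t^{-1/2} e^{-m_1 t}\bigr)$. The delicacy of the argument is that it simultaneously leverages the exponential gap and the a priori integrability $\int m^{-1}\, d\tilde{\rho} = K(0)/\pi < \infty$ inherited from $K(0)<\infty$, rather than any finiteness of $\tilde{\rho}$ itself.
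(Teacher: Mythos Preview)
Your proof is correct and follows essentially the same strategy as the paper: split $\hat{H}(t)$ via the spectral representation into the atom at $m_1$ plus a tail over $[m_1+\epsilon,\infty)$, extract the leading asymptotic from the atom, and control the tail using the gap together with the integrability $\int m^{-1}\,d\tilde\rho<\infty$. The only cosmetic difference is that you identify the inner $p$-integral as $K_0(mt)$ and quote its standard large-argument asymptotic, whereas the paper derives that asymptotic by hand via an elementary Laplace-type analysis of $\int_1^\infty (y^2-1)^{-1/2}e^{-mty}\,dy$.
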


\begin{proof}
Using the K\"all\'en-Lehmann spectral representation (see \eqref{eq:Hrep} and \eqref{eq:delta}), we have for $t>0$ that
\begin{equation}
\hat{H}(t)= \int_{0}^{\infty} \int_{-\infty}^{\infty}\frac{e^{-\sqrt{m^2+p^2}t}}{2\sqrt{m^2+p^2}}dpd\tilde{\rho}(m).
\end{equation}
Recall that the support of $\tilde{\rho}$ is in $[m_1,\infty)$. Under the assumption that $\tilde{\rho}\left((m_1,m_1+\epsilon)\right)=0$ for some $\epsilon>0$ , we have
\begin{equation}\label{eq:Hoft}
\hat{H}(t)= \tilde{\rho}\left(\{m_1\}\right) \int_{-\infty}^{\infty}\frac{e^{-\sqrt{m_1^2+p^2}t}}{2\sqrt{m_1^2+p^2}}dp+\int_{m_1+\epsilon}^{\infty} \int_{-\infty}^{\infty}\frac{e^{-\sqrt{m^2+p^2}t}}{2\sqrt{m^2+p^2}}dpd\tilde{\rho}(m).
\end{equation}
By the change of variable $y=\sqrt{1+p^2/m^2}$, we get
\begin{align}
&\int_{-\infty}^{\infty}\frac{e^{-\sqrt{m^2+p^2}t}}{2\sqrt{m^2+p^2}}dp=\int_{1}^{\infty}\frac{e^{-mty}}{\sqrt{y^2-1}}dy\label{eq:appB1}\\
&\quad=\int_{1}^{\infty}\frac{e^{-mty}}{\sqrt{2(y-1)}}dy+\int_{1}^{\infty}\left[\frac{1}{\sqrt{y^2-1}}-\frac{1}{\sqrt{2(y-1)}}\right]e^{-mty}dy.\label{eq:appB2}
\end{align}
Using the inequality
\begin{equation}
\sqrt{y+1}-\sqrt{2}\leq \frac{y-1}{2\sqrt{2}} \text{ for all } y\geq1,
\end{equation}
we have
\begin{align}\label{eq:appB3}
&\left|\int_{1}^{\infty}\left[\frac{1}{\sqrt{y^2-1}}-\frac{1}{\sqrt{2(y-1)}}\right]e^{-mty}dy\right|\leq\frac{1}{8}\int_1^{\infty}\sqrt{y-1}e^{-mty}dy\nonumber\\
&\quad=\frac{1}{8}t^{-3/2}e^{-mt}\int_0^{\infty}y^{1/2}e^{-my}dy.
\end{align}
On the other hand, the change of variable $u=\sqrt{t(y-1)}$ gives
\begin{align}\label{eq:appB4}
\int_1^{\infty}\frac{e^{-mty}}{\sqrt{2(y-1)}}dy&=(2t)^{-1/2}e^{-mt}\int_0^{\infty}2e^{-mu^2}du\nonumber\\
&=\sqrt{\pi/(2m)}t^{-1/2}e^{-mt}.
\end{align}
Combining \eqref{eq:appB1}-\eqref{eq:appB4}, we get that
\begin{equation}\label{eq:appB5}
\lim_{t\rightarrow\infty}\frac{\int_{-\infty}^{\infty}\frac{e^{-\sqrt{m_1^2+p^2}t}}{2\sqrt{m_1^2+p^2}}dp}{t^{-1/2}e^{-m_1t}}=\sqrt{\pi/(2m_1)}.
\end{equation}
By \eqref{eq:appB1} and \eqref{eq:appB4}, we have
\begin{align}
\int_{m_1+\epsilon}^{\infty} \int_{-\infty}^{\infty}\frac{e^{-\sqrt{m^2+p^2}t}}{2\sqrt{m^2+p^2}}dpd\tilde{\rho}(m)
&\leq\int_{m_1+\epsilon}^{\infty} \int_{1}^{\infty}\frac{e^{-mty}}{\sqrt{2(y-1)}}dyd\tilde{\rho}(m)\nonumber\\
&= \int_{m_1+\epsilon}^{\infty}\sqrt{\pi/(2m)}t^{-1/2}e^{-mt}d\tilde{\rho}(m).
\end{align}
Therefore,
\begin{align}
&\limsup_{t\rightarrow\infty} \left[\int_{m_1+\epsilon}^{\infty} \int_{-\infty}^{\infty}\frac{e^{-\sqrt{m^2+p^2}t}}{2\sqrt{m^2+p^2}}dpd\tilde{\rho}(m)\right]/\left[t^{-1/2}e^{-m_1t}\right]\nonumber\\
&\quad\leq\limsup_{t\rightarrow\infty} \int_{m_1+\epsilon}^{\infty}\sqrt{\pi/(2m)}e^{-(m-m_1)t}d\tilde{\rho}(m).
\end{align}
It is not hard to prove that the last limit is $0$ by using the fact that $\int_{m_1}^{\infty}(1/m)d\tilde{\rho}(m)<\infty$ (see Corollary \ref{cor:rho}). This, \eqref{eq:Hoft} and \eqref{eq:appB5} complete the proof of the proposition.
\end{proof}

The power-law correction to the exponential decay in \eqref{eq:Hlarge} is usually known as Ornstein-Zernike decay (see, e.g., Section \upperRomannumeral{7} of \cite{BF85}).
It may be possible to prove \eqref{eq:Hlarge} directly without the assumption of an upper mass gap. Based on this, we have the following large time behavior of~$K$.
\begin{proposition}\label{prop:Klarge}
Suppose \eqref{eq:Hlarge} holds. Then we have
\begin{equation}
\lim_{t\rightarrow\infty}\frac{K(t)}{e^{-m_1t}}=C_3\sqrt{2\pi/m_1}=\pi\tilde{\rho}\left(\{m_1\}\right)/m_1.
\end{equation}
\end{proposition}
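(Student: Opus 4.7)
The plan is to apply Laplace's method directly to a radial representation of $K$ in terms of $\hat{H}$. Starting from \eqref{eq:Kdef} and \eqref{eq:Hhat} and using that $H$ is a function of $\sqrt{t^2+y^2}$, I would first rewrite
\[
K(t) \;=\; 2\int_0^{\infty}\hat{H}(\sqrt{t^2+y^2})\,dy \;=\; 2\int_t^{\infty}\hat{H}(r)\,\frac{r}{\sqrt{r^2-t^2}}\,dr,
\]
and then perform the shift $r=t+u$ to obtain
\[
K(t) \;=\; 2\int_0^{\infty}\hat{H}(t+u)\,\frac{t+u}{\sqrt{u(2t+u)}}\,du.
\]
The point of the shift is that the $t$-dependence in the exponential factor of $\hat{H}(t+u)$ cleanly separates off as $e^{-m_1 t}$ after applying the hypothesis.

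Next, using \eqref{eq:Hlarge} I would write $\hat{H}(r)=(C_3+\epsilon(r))\,r^{-1/2}e^{-m_1 r}$ with $\epsilon(r)\to 0$ as $r\to\infty$, so that factoring out $e^{-m_1 t}$ gives
\[
\frac{K(t)}{e^{-m_1 t}} \;=\; 2\int_0^{\infty}\bigl(C_3+\epsilon(t+u)\bigr)\sqrt{\frac{t+u}{u(2t+u)}}\,e^{-m_1 u}\,du.
\]
The integrand converges pointwise to $C_3\,(2u)^{-1/2}e^{-m_1 u}$ as $t\to\infty$, because $(t+u)/(2t+u)\to 1/2$ and $\sup_{r\geq t}|\epsilon(r)|\to 0$. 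Combined with the elementary bound $(t+u)/(u(2t+u))\leq 1/u$ and the eventual uniform boundedness of $\epsilon$, dominated convergence yields
\[
\lim_{t\to\infty}\frac{K(t)}{e^{-m_1 t}} \;=\; 2C_3\int_0^{\infty}(2u)^{-1/2}e^{-m_1 u}\,du \;=\; C_3\sqrt{2\pi/m_1},
\]
by the standard identity $\int_0^{\infty}u^{-1/2}e^{-m_1 u}du=\sqrt{\pi/m_1}$. Substituting $C_3=\tilde{\rho}(\{m_1\})\sqrt{\pi/(2m_1)}$ from \eqref{eq:Hlarge} converts this into $\pi\,\tilde{\rho}(\{m_1\})/m_1$, matching the claim.

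I do not anticipate a serious obstacle: the argument is essentially textbook Laplace asymptotics. The only step demanding any care is the uniform control of the error $\epsilon$ inside the $u$-integral, which either slots into the dominated-convergence argument above or can be handled by splitting off a main piece and bounding the remainder by $\sup_{r\geq t}|\epsilon(r)|$ times a fixed multiple of the main integral (which vanishes by \eqref{eq:Hlarge}). Note that the direct Laplace-type argument above uses only the Ornstein--Zernike asymptotic \eqref{eq:Hlarge}, matching the stated hypothesis of the proposition rather than the stronger upper-gap assumption from the preceding proposition of this appendix; under that stronger hypothesis one could alternatively read the conclusion essentially for free from the spectral formula~\eqref{eq:Kdiff}.
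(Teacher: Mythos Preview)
Your argument is correct. The change of variables $y\mapsto r=\sqrt{t^2+y^2}$ followed by the shift $r=t+u$ is valid, and after writing $\hat H(r)=(C_3+\epsilon(r))r^{-1/2}e^{-m_1 r}$ the dominated convergence step goes through as you say: for $t$ large enough that $\sup_{r\geq t}|\epsilon(r)|\leq 1$, the integrand is dominated by $(|C_3|+1)u^{-1/2}e^{-m_1 u}$, which is integrable on $(0,\infty)$. The evaluation of the limiting integral and the identification with $\pi\tilde\rho(\{m_1\})/m_1$ are also correct.

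Your route differs from the paper's. The paper does not make the radial substitution; instead it proves a separate ancillary lemma (Lemma~\ref{lem:anc}) giving the asymptotics of $\int_0^\infty (u^2+t^2)^{-1/4}e^{-m\sqrt{u^2+t^2}}\,du$ via a three-region decomposition $[0,t^\beta]\cup[t^\beta,t^\alpha]\cup[t^\alpha,\infty)$ with $0<\beta<1/2<\alpha<3/4$, a Taylor expansion of $\sqrt{1+(u/t)^2}$ on the middle region, and then a sandwich argument using $(C_3\pm\epsilon)r^{-1/2}e^{-m_1 r}$ as upper and lower bounds on $\hat H$. Your approach is shorter and more elementary: the shift $r=t+u$ makes the exponential factor $e^{-m_1 t}$ split off exactly, so no Taylor expansion or region-splitting is needed, and a single application of dominated convergence replaces the full ancillary lemma. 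The paper's route, on the other hand, isolates the Laplace asymptotic as a stand-alone statement about a concrete integral, which may be of independent use. Your closing remark that under the stronger upper-gap assumption the conclusion can be read off directly from \eqref{eq:Kdiff} is also a fair observation.
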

We need the following ancillary lemma.
\begin{lemma}\label{lem:anc}
Let $m>0$. For any $\alpha,\beta$ satisfying $0<\beta<1/2<\alpha<3/4$, we have
\begin{equation}
\lim_{t\rightarrow\infty}\frac{\int_0^{\infty}(u^2+t^2)^{-1/4}e^{-m\sqrt{u^2+t^2}}du}{e^{-mt}}=\lim_{t\rightarrow\infty}\frac{\int_{t^{\alpha}}^{t^{\beta}}e^{-m\sqrt{u^2+t^2}}du}{t^{1/2}e^{-mt}}=\sqrt{\frac{\pi}{2m}}.
\end{equation}
\end{lemma}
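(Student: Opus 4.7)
The plan is to handle both limits simultaneously via the rescaling $u=\sqrt{t/m}\,s$, which is chosen so that the leading quadratic behaviour of $\sqrt{u^2+t^2}-t$ at $u=0$ becomes exactly the Gaussian exponent $s^2/2$. The key algebraic identity I would exploit is
\begin{equation*}
m\bigl(\sqrt{u^2+t^2}-t\bigr)=\frac{m u^2}{\sqrt{u^2+t^2}+t}=\frac{s^2}{\sqrt{1+s^2/(mt)}+1},
\end{equation*}
which converges pointwise to $s^2/2$ as $t\to\infty$. This is the only nontrivial input; everything else is bookkeeping.

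For the first limit, after the substitution and dividing by $e^{-mt}$, I would obtain
\begin{equation*}
\frac{\int_0^{\infty}(u^2+t^2)^{-1/4}e^{-m\sqrt{u^2+t^2}}\,du}{e^{-mt}}=m^{-1/2}\int_0^{\infty}\bigl(1+s^2/(mt)\bigr)^{-1/4}\exp\!\Bigl(-\tfrac{s^2}{\sqrt{1+s^2/(mt)}+1}\Bigr)ds,
\end{equation*}
whose integrand tends pointwise to $e^{-s^2/2}$. An application of dominated convergence then gives $m^{-1/2}\int_0^{\infty}e^{-s^2/2}\,ds=\sqrt{\pi/(2m)}$, as required. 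For the second limit, reading the bounds in increasing order as $\int_{t^\beta}^{t^\alpha}$, the same substitution converts the ratio in the statement into
\begin{equation*}
m^{-1/2}\int_{\sqrt{m}\,t^{\beta-1/2}}^{\sqrt{m}\,t^{\alpha-1/2}}\bigl(1\bigr)\cdot\exp\!\Bigl(-\tfrac{s^2}{\sqrt{1+s^2/(mt)}+1}\Bigr)ds,
\end{equation*}
and here the hypotheses $\beta<1/2<\alpha$ are used \emph{only} to ensure that the lower limit of integration tends to $0$ and the upper limit tends to $+\infty$. Dominated convergence again yields $\sqrt{\pi/(2m)}$.

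The only technical step that requires a short calculation is the uniform integrable domination. I would derive it from the elementary bound $\sqrt{1+x}-1=x/(\sqrt{1+x}+1)\ge\tfrac{1}{3}\min(x,\sqrt{x})$ for $x\ge 0$, which yields $s^2/(\sqrt{1+s^2/(mt)}+1)\ge s/3$ whenever $s\ge 1$ and $mt\ge 1$. Combined with the trivial bound $(1+s^2/(mt))^{-1/4}\le 1$, this shows that the integrand appearing in both reductions is dominated, uniformly in large $t$, by $\mathbf{1}_{[0,1]}(s)+e^{-s/3}\mathbf{1}_{[1,\infty)}(s)\in L^1(\mathbb{R}_+)$. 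The upper cutoff $\alpha<3/4$ plays no role in this argument; presumably it appears because the lemma will be applied elsewhere to split a larger integral into a Gaussian core plus a tail, with the cutoffs $t^\beta$ and $t^\alpha$ chosen so that each remainder is negligible compared with $t^{1/2}e^{-mt}$. No serious obstacle is anticipated; the content is purely Laplace-type asymptotics.
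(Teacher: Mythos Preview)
Your argument is correct. The substitution $u=\sqrt{t/m}\,s$ together with the identity $m(\sqrt{u^2+t^2}-t)=s^2/(\sqrt{1+s^2/(mt)}+1)$ reduces both limits to $m^{-1/2}\int_0^\infty e^{-s^2/2}\,ds$, and your domination by $\mathbf{1}_{[0,1]}+e^{-s/3}\mathbf{1}_{[1,\infty)}$ (valid for $mt\ge 1$) is clean and sufficient. For the second limit one should, strictly speaking, insert the indicator $\mathbf{1}_{[\sqrt{m}\,t^{\beta-1/2},\,\sqrt{m}\,t^{\alpha-1/2}]}(s)$ into the integrand to put it in standard dominated-convergence form with a fixed domain, but you clearly have this in mind.

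Your route is genuinely different from the paper's. The paper proceeds in the classical Laplace-method style: it first splits $\int_0^\infty=\int_0^{t^\beta}+\int_{t^\beta}^{t^\alpha}+\int_{t^\alpha}^\infty$ and shows the outer pieces are $o(e^{-mt})$ via the linear lower bound $\sqrt{u^2+t^2}\ge t^{-1/2}u+\sqrt{t^2-t}$; then on the middle range it replaces $(u^2+t^2)^{-1/4}$ by $t^{-1/2}$, and finally uses the Taylor expansion $\sqrt{1+(u/t)^2}=1+u^2/(2t^2)+O((u/t)^4)$ to reduce to a Gaussian integral. In that last step the hypothesis $\alpha<3/4$ is essential, since the error term $mt\cdot(u/t)^4$ is controlled by $t^{4\alpha-3}$, which must vanish. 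Your global substitution bypasses all three stages at once and shows, as you suspected, that the restriction $\alpha<3/4$ is an artifact of the paper's method rather than a genuine requirement of the lemma. The trade-off is that the paper's decomposition makes the localisation of the integral around $u=0$ explicit, whereas your argument hides it inside the dominated-convergence step; for the purposes of this lemma your approach is shorter and strictly more general.
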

\begin{proof}
We first note that
\begin{align}\label{eq:anc1}
&\int_0^{t^{\beta}}(u^2+t^2)^{-1/4}e^{-m\sqrt{u^2+t^2}}du \leq \int_0^{t^{\beta}}t^{-1/2}e^{-m\sqrt{u^2+t^2}}du\nonumber\\
&\quad\leq t^{-1/2}\int_0^{t^{\beta}} e^{-m\left(t^{-1/2}u+\sqrt{1-t^{-1}}t\right)}du=\frac{1}{m}e^{-m\sqrt{t^2-t}}\left[1-e^{-mt^{\beta-1/2}}\right]\nonumber\\
&\quad\leq t^{\beta-1/2}e^m e^{-mt},
\end{align}
where we have used the bound (which can be proved by the Cauchy-Schwarz inequality)
\begin{equation}
\sqrt{u^2+t^2}\geq t^{-1/2}u+\sqrt{1-t^{-1}}t \text{ for all } u, t>0
\end{equation}
in the second inequality, and
\begin{equation}
1-e^{-y}\leq y \text{ for all }y\geq 0, \sqrt{t^2-t}\geq t-1 \text{ for all } t\geq 1
\end{equation}
in the last inequality. Using the same inequalities, one can show that
\begin{align}\label{eq:anc2}
&\int_{t^{\alpha}}^{\infty}(u^2+t^2)^{-1/4}e^{-m\sqrt{u^2+t^2}}du\leq\int_{t^{\alpha}}^{\infty}t^{-1/2}e^{-m\sqrt{u^2+t^2}}du\nonumber\\
&\quad\leq t^{-1/2}\int_{t^{\alpha}}^{\infty} e^{-m\left(t^{-1/2}u+\sqrt{1-t^{-1}}t\right)}du=\frac{1}{m}e^{-m\sqrt{t^2-t}}e^{-mt^{\alpha-1/2}}\nonumber\\
&\quad\leq \frac{e^m}{m}e^{-mt^{\alpha-1/2}}e^{-mt}.
\end{align}

Combining \eqref{eq:anc1} and \eqref{eq:anc2}, we get
\begin{equation}\label{eq:anc3}
\lim_{t\rightarrow\infty}\frac{\int_0^{\infty}(u^2+t^2)^{-1/4}e^{-m\sqrt{u^2+t^2}}du}{e^{-mt}}=\lim_{t\rightarrow\infty}\frac{\int_{t^{\beta}}^{t^{\alpha}}(u^2+t^2)^{-1/4}e^{-m\sqrt{u^2+t^2}}du}{e^{-mt}}
\end{equation}
provided the limit on the LHS exists. It is easy to see that
\begin{align}
(t^{2\alpha}+t^2)^{-1/4}\int_{t^{\beta}}^{t^{\alpha}}e^{-m\sqrt{u^2+t^2}}du&\leq \int_{t^{\beta}}^{t^{\alpha}}(u^2+t^2)^{-1/4}e^{-m\sqrt{u^2+t^2}}du\nonumber\\
&\leq (t^{2\beta}+t^2)^{-1/4}\int_{t^{\beta}}^{t^{\alpha}}e^{-m\sqrt{u^2+t^2}}du.
\end{align}

This and \eqref{eq:anc3} prove the first equality in the lemma provided the limit exists.  By the Taylor expansion, one can show that there exist constants $C_4, C_5\in(0,\infty)$ such that for any $u\in[t^{\beta}, t^{\alpha}]$
\begin{equation}
1+\frac{u^2}{2t^2}-C_4\left(\frac{u}{t}\right)^4\leq\sqrt{1+(u/t)^2}\leq 1+\frac{u^2}{2t^2}-C_5\left(\frac{u}{t}\right)^4.
\end{equation}
Therefore,
\begin{align}
e^{mC_5 t^{4\beta-3}}\int_{t^{\beta}}^{t^{\alpha}}e^{\frac{-mu^2}{2t}}du&\leq\int_{t^{\beta}}^{t^{\alpha}}\exp\left(mt\left[1-\sqrt{1+(u/t)^2}\right]\right)du\nonumber\\
&\leq e^{mC_4 t^{4\alpha-3}}\int_{t^{\beta}}^{t^{\alpha}}e^{\frac{-mu^2}{2t}}du.
\end{align}

By our assumption on $\alpha$ and $\beta$, this implies
\begin{equation}
\lim_{t\rightarrow\infty}\frac{\int_{t^{\alpha}}^{t^{\beta}}e^{-m\sqrt{u^2+t^2}}du}{t^{1/2}e^{-mt}}=\lim_{t\rightarrow\infty}\frac{\int_{t^{\beta}}^{t^{\alpha}}e^{\frac{-mu^2}{2t}}du}{t^{1/2}}
\end{equation}
if the limit on the LHS exists. Since
\begin{align}
&\int_0^{t^{\beta}}e^{\frac{-mu^2}{2t}}du\leq t^{\beta}\text{ and } \nonumber\\
&\int_{t^{\alpha}}^{\infty}e^{\frac{-mu^2}{2t}}du\leq\int_{t^{\alpha}}^{\infty}\exp\left(\frac{-m(t^{\alpha})^{1/\alpha}u^{2-1/\alpha}}{2t}\right)du<\infty,
\end{align}
we have
\begin{equation}
\lim_{t\rightarrow\infty}\frac{\int_{t^{\beta}}^{t^{\alpha}}e^{\frac{-mu^2}{2t}}du}{t^{1/2}}=\lim_{t\rightarrow\infty}\frac{\int_{0}^{\infty}e^{\frac{-mu^2}{2t}}du}{t^{1/2}}=\sqrt{\frac{\pi}{2m}},
\end{equation}
where we have used the fact that $(1/\sqrt{2\pi t/m})e^{-mu^2/(2t)}$ is the density of $N(0,t/m)$.
\end{proof}

\begin{proof}[Proof of Proposition \ref{prop:Klarge}]
The limit \eqref{eq:Hlarge} implies that, for any fixed $\epsilon>0$,
\begin{equation}
(C_3-\epsilon)r^{-1/2}e^{-m_1r}\leq \hat{H}(r)\leq (C_3+\epsilon)r^{-1/2}e^{-m_1r} \text{ for all large }r.
\end{equation}
Lemma \ref{lem:anc} and the definition of $K$ (see \eqref{eq:Kdef}) imply that
\begin{equation}
(C_3-\epsilon)\sqrt{\frac{2\pi}{m}}\leq \lim_{t\rightarrow \infty}\frac{K(t)}{e^{-m_1t}}\leq (C_3+\epsilon)\sqrt{\frac{2\pi}{m}}.
\end{equation}
This completes the proof of the proposition by sending $\epsilon\downarrow 0$.
\end{proof}

\section*{Acknowledgements}
The research of CMN was supported in part by US-NSF grant DMS-1507019 and that of JJ by STCSM grant 17YF1413300. The authors thank Tom Spencer for some useful discussions related to this work.


\begin{thebibliography}{99}

\bibitem{BD65}
\textsc{J.D. Bjorken} and \textsc{S.D. Drell}
(1965). \textit{Relativistic Quantum Fields}. McGraw-Hill.

\bibitem{BG11}
\textsc{D. Borthwick} and \textsc{S. Garibaldi}
(2011). Did a 1-dimensional magnet detect a 248-dimensional Lie algebra? \textit{Notices Amer. Math. Soc.} \textbf{58} 1055-1066.

\bibitem{BF85}
\textsc{J. Bricmont} and \textsc{J. Fr\"{o}hlich}
(1985). Statistical mechanical methods in particle structure analysis of lattice field theories \upperRomannumeral{2}. Scalar and surface models. \textit{Commun. Math. Phys.} \textbf{98} 553-578.

\bibitem{CGN15}
\textsc{F. Camia}, \textsc{C. Garban} and \textsc{C.M. Newman}
(2015). Planar Ising magnetization field \upperRomannumeral{1}. Uniqueness of the critical scaling limits. \textit{Ann. Probab.} \textbf{43} 528-571.

\bibitem{CGN16}
\textsc{F. Camia}, \textsc{C. Garban} and \textsc{C.M. Newman}
(2016). Planar Ising magnetization field \upperRomannumeral{2}. Properties of the critical and near-critical scaling limits. \textit{Ann. Inst. H. Poincar\'e Probab. Statist.} \textbf{52} 146-161.

\bibitem{CJN17}
\textsc{F. Camia}, \textsc{J. Jiang} and \textsc{C.M. Newman}
(2017). Exponential decay for the near-critical scaling limit of the planar Ising model. \textit{To appear in Commun. Pure Appl. Math}. Available as arXiv:1707.02668v3.

\bibitem{CJN19}
\textsc{F. Camia}, \textsc{J. Jiang} and \textsc{C.M. Newman}
(2019). FK-Ising coupling applied to near-critical planar models. To appear in \textit{Stoch. Proc. Appl.} Available as arXiv:1709.00582v2.

\bibitem{CHI15}
\textsc{D. Chelkak}, \textsc{C. Hongler} and \textsc{K. Izyurov}
(2015). Conformal invariance of spin correlations in the planar Ising model. \textit{Ann. Math.} \textbf{181} 1087-1138.

\bibitem{Del04}
\textsc{G. Delfino}
(2004). Integrable field theory and critical phenomena: the Ising model in a magnetic field. \textit{J. Phys. A: Math. Gen.} \textbf{37} R45-R78.

\bibitem{Dur05}
\textsc{R. Durrett}
(2005). \textit{Probability: Theory and Examples}. 3nd ed., Duxbury.

\bibitem{Fol99}
\textsc{G.B. Folland}
(1999). \textit{Real Analysis, Modern Techniques and Their Applications}. 2nd ed., John Wiley and Sons, New York.

\bibitem{FM17}
\textsc{M. Furlan} and \textsc{J.-C. Mourrat}
(2017). A tightness criterion for random fields, with application to the Ising model. \textit{Electron. J. Probab.} \textbf{22} 1-29.

\bibitem{Fer64}
\textsc{X. Fernique}
(1964). Continuit\'{e} des processus Gaussiens. \textit{C. R. Acad. Sci. Paris} \textbf{258} 6058-6060.

\bibitem{GJ87}
\textsc{J. Glimm} and \textsc{A. Jaffe}
(1987). \textit{Quantum Physics: A Functional Integral Point of View}. 2nd ed., Springer-Verlag.

\bibitem{GHS70}
\textsc{R.B. Griffiths}, \textsc{C.A. Hurst} and \textsc{S. Sherman}
(1970). Concavity of magnetization of an Ising ferromagnet in a positive external field. \textit{J. Math. Phys.} \textbf{11} 790-795.

\bibitem{HT75}
\textsc{G.G. Hamedani} and \textsc{M.N. Tata}
(1975). On the determination of the bivariate normal distribution from distributions of linear combinations of the variables \textit{Amer. Math. Monthly} \textbf{82} 913-915.







\bibitem{LY52}
\textsc{T.D. Lee} and \textsc{C.N. Yang}
(1952). Statistical theory of equations of state and phase transition. \upperRomannumeral{2}. lattice gas and Ising model. \textit{Phys. Rev.} \textbf{87} 410-419.

\bibitem{MS70}
\textsc{M.B. Marcus} and \textsc{L.A. Shepp}
(1970). Continuity of Gaussian processes. \textit{Trans. Amer. Math. Soc.} \textbf{151} 377-391.


\bibitem{MM12}
\textsc{B. McCoy} and \textsc{J.M. Maillard}
(2012). The importance of the Ising model. \textit{Prog. Theor. Phys.} \textbf{127} 791-817.


\bibitem{MM97}
\textsc{I. Montray} and \textsc{G. M\"unster}
(1997). \textit{Quantum Fields on a Lattice}. Cambridge University Press, Cambridge.

\bibitem{New80}
\textsc{C.M. Newman}
(1980). Normal fluctuations and the FKG inequalities. \textit{Commun. Math. Phys.} \textbf{74} 119-128.

\bibitem{NW81}
\textsc{C.M. Newman} and \textsc{A.L. Wright}
(1981). An invariance principle for certain dependent sequences. \textit{Ann. Probab.} \textbf{9} 671-675.

\bibitem{NW19}
\textsc{C.M. Newman} and \textsc{W. Wu}
(2019). Lee-Yang property and Gaussian multiplicative chaos. \textit{Commun. Math. Phys.} \textbf{369} 153-170.


\bibitem{OS73}
\textsc{K. Osterwalder} and \textsc{R. Schrader}
(1973). Axioms for Euclidean Green's functions. \textit{Commun. Math. Phys.} \textbf{31} 83-112.




\bibitem{Wu66}
\textsc{T.T. Wu}
(1966). Theory of Toeplitz determinants and the spin correlations of two-dimensional Ising model. \upperRomannumeral{1}. \textit{Phys. Rev.} \textbf{149} 380-401.

\bibitem{Zam89a}
\textsc{A.B. Zamolodchikov}
(1989). Integrals of motion and S-matrix of the (scaled) $T=T_c$ Ising model with magnetic field. \textit{Int. J. Mod. Phys.} \textbf{04} 4235-4248.

\bibitem{Zam89b}
\textsc{A.B. Zamolodchikov}
(1989). Integrable Field Theory from Conformal Field Theory. \textit{Adv. Stud. Pure Math.} \textbf{19}.
Integrable Systems in Quantum Field Theory and Statistical Mechanics, M. Jimbo, T. Miwa and A. Tsuchiya, eds.
(Tokyo: Mathematical Society of Japan, 641 - 674.


\end{thebibliography}
\end{document}